\documentclass{conm-p-l}
\usepackage{amsmath,amssymb,amsthm,amsfonts,amscd,amsopn}
\usepackage{eucal,mathrsfs}
\usepackage{xspace,chngcntr}
\usepackage{rotating,mathtools}
\usepackage[all,cmtip,color]{xy}\xyoption{dvips}

\usepackage{comment} 
\usepackage{supertabular}
\usepackage
{hyperref}
\usepackage[dvipsnames,usenames]{xcolor}
\hypersetup{
    colorlinks=true,
    linkcolor={blue!50!black},
    citecolor={green!50!black},
    urlcolor={red!80!black}
}

\usepackage{datetime,chngcntr}
\usepackage{paralist}

  \pltopsep=1pt
  \plitemsep=1pt
  \plparsep=1pt
\usepackage{enumitem}
\binoppenalty=10000
\relpenalty=10000
\usepackage{eurosym}

\usepackage{mfirstuc}

\NeedsTeXFormat{LaTeX2e}
\ProvidesPackage{skautoref-sf}
   [2017/06/13 v0.1 finetuning mabliautoref and theorem setup]

\RequirePackage{hyperref}

\newcommand{\sectionsize}{\footnotesize}
\newcommand{\theoremsize}{\small}

\renewcommand{\subsectionautorefname}{\sectionsize\sf \subsectionautorefname}

\makeatletter
\@ifdefinable\equationname{\let\equationname\equationautorefname}
\def\equationautorefname~#1\@empty\@empty\null{\protect{\theoremsize\sf
    (#1\@empty\@empty\null)}}%
\@ifdefinable\AMSname{\let\AMSname\AMSautorefname}
\def\AMSautorefname~#1\@empty\@empty\null{\rm (#1\@empty\@empty\null)}%
\@ifdefinable\itemname{\let\itemname\itemautorefname}
\def\itemautorefname~#1\@empty\@empty\null{\theoremsize\sf #1\@empty\@empty\null%
}%
\makeatother



%
\RequirePackage{amsthm}
\RequirePackage{aliascnt}
\newcommand{\basetheorem}[3]{%
    \newtheorem{#1}{#2}[#3]
    \newtheorem*{#1*}{#2}
    \expandafter\def\csname #1autorefname\endcsname{#2}
}%
\newcommand{\maketheorem}[3]{%
    \newaliascnt{#1}{#2}
    \newtheorem{#1}[#1]{\theoremsize\sf #3}
    \aliascntresetthe{#1}
    \expandafter\def\csname #1autorefname\endcsname{\theoremsize\sf #3}
    \newtheorem{#1*}{#3}
}%
\newcommand{\baseremark}[3]{%
    \newtheorem{#1}{#2}{#3}
    \newtheorem*{#1*}{#2}
    \expandafter\def\csname #1autorefname\endcsname{#2}
}%
\newcommand{\makeremark}[3]{%
    \newaliascnt{#1}{#2}
    \newtheorem{#1}[equation]{#3}
    \aliascntresetthe{#1}
    \expandafter\def\csname #1autorefname\endcsname{\theoremsize\sf #3}
    \newtheorem{#1*}{#3}
}%
\theoremstyle{plain}   

\basetheorem{theorem}{Theorem}{section}







\newcounter{are-there-sections}
\setcounter{are-there-sections}{1}


\usepackage{amsbsy}

\usepackage{marvosym}

\DeclareMathAlphabet{\smallchanc}{OT1}{pzc}%
                                 {m}{it}
\DeclareFontFamily{OT1}{pzc}{}
\DeclareFontShape{OT1}{pzc}{m}{it}%
             {<-> s * [1.100] pzcmi7t}{}
\DeclareMathAlphabet{\mathchanc}{OT1}{pzc}%
                                 {m}{it}

\newcommand{\mcH}{\mathchanc{H}}

\newcommand{\mcR}{\mathchanc{R}}


\newcommand{\mcm}{\mathchanc{m}}

\newcommand{\mco}{\mathchanc{o}}




\newcommand{\sA}{\mathscr{A}}
\newcommand{\sB}{\mathscr{B}}

\newcommand{\sE}{\mathscr{E}}
\newcommand{\sF}{\mathscr{F}}
\newcommand{\sG}{\mathscr{G}}

\newcommand{\sL}{\mathscr{L}}

\newcommand{\sN}{\mathscr{N}}
\newcommand{\sO}{\mathscr{O}}







\newcommand{\bN}{\mathbb{N}}

\newcommand{\bP}{\mathbb{P}}
\newcommand{\bQ}{\mathbb{Q}}

\newcommand{\bZ}{\mathbb{Z}}











\DeclareSymbolFont{largesymbolsA}{U}{jkpexa}{m}{n}
\SetSymbolFont{largesymbolsA}{bold}{U}{jkpexa}{bx}{n}
\DeclareMathSymbol{\varprod}{\mathop}{largesymbolsA}{16}


\newcommand{\properideal}%
        {\subsetneq}

\newcommand{\wt}{\widetilde}
\newcommand{\what}{\widehat}









\DeclareMathOperator{\coker}{{coker}}

\newcommand{\sHom}[0]{{\mcH\mco\mcm}}

\DeclareMathOperator{\im}{{im}}

\DeclareMathOperator{\Spec}{{Spec}}

\DeclareMathOperator{\sym}{{Sym}}

\DeclareMathOperator{\ver}{{\,\vert\,}}

%
%
\newcommand{\factor}[2]{\left. \raise .2em\hbox{\ensuremath{#1}\vphantom{$I^d$}}
\hskip -.1em \right/ \hskip -.4em \raise -.3em\hbox{\ensuremath{#2}}}%
\newcommand\mtimes[3]{{\varprod_{#1}^{#2}}_{\raise 1ex \hbox{\scriptsize #3}}}%



\newcommand{\myR}{{\mcR\!}}



%


\newcommand{\kdot}{{{\,\begin{picture}(1,1)(-1,-2)\circle*{2}\end{picture}\,}}}


\newcommand{\dcx}[1]{{\omega}^\kdot_{#1}}

\def\dimcoh#1.#2.#3.{h^{#1}(#2,#3)}
\def\hypcoh#1.#2.#3.{\mathbb H_{\vphantom{l}}^{#1}(#2,#3)}
\def\loccoh#1.#2.#3.#4.{H^{#1}_{#2}(#3,#4)}
\def\dimloccoh#1.#2.#3.#4.{h^{#1}_{#2}(#3,#4)}
\def\lochypcoh#1.#2.#3.#4.{\mathbb H^{#1}_{#2}(#3,#4)}
\def\seslong#1.#2.#3.{0  \longrightarrow  #1   \longrightarrow 
 #2 \longrightarrow #3 \longrightarrow 0} 
\def\sesshort#1.#2.#3.{0
 \rightarrow #1 \rightarrow #2 \rightarrow #3 \rightarrow 0}
\def\dist#1.#2.#3.{  #1   \longrightarrow 
 #2 \longrightarrow #3 \stackrel{+1}{\longrightarrow} } 
\def\CDdist#1.#2.#3.{  #1   @>>>  #2  @>>>   #3 @>+1>> }  
\def\shortses#1.#2.#3.{0  \rightarrow  #1   \rightarrow 
 #2  \rightarrow   #3 \rightarrow  0}
\def\shortdist#1.#2.#3.{  #1   \rightarrow 
 #2  \rightarrow   #3 \stackrel{+1}{\rightarrow} }  
\def\ddist#1.#2.#3.#4.#5.#6.{\CD
#1 @>>> #2 @>>> #3 @>+1>> \\
@VVV @VVV @VVV \\
#4 @>>> #5 @>>> #6 @>+1>> 
\endCD}
\def\ddistun#1.#2.#3.#4.#5.#6.{\CD
#1 @>>> #2 @>>> #3 @>+1>> \\
@. @VVV @VVV  \\
#4 @>>> #5 @>>> #6 @>+1>> 
\endCD}
\def\Iff#1#2#3{
\hfil\hbox{\hsize =#1
\vtop{\noin #2}
\hskip.5cm 
\lower.5\baselineskip\hbox{$\Leftrightarrow$}\hskip.5cm
\vtop{\noin #3}}\hfil\medskip}
\newcommand{\union}\cup
\newcommand{\intersect}\cap
\newcommand{\Union}\bigcup
\newcommand{\Intersect}\bigcap
\def\myoplus#1.#2.{\underset #1 \to {\overset #2 \to \oplus}}

\newcommand{\resto}[1]{\raise -.5ex\hbox{$\vert$}_{#1}}



\begin{document}
\makeatletter
\definecolor{brick}{RGB}{204,0,0}
\def\@cite#1#2{{%
 \m@th\upshape\mdseries[{\small\sffamily #1}{\if@tempswa, \small\sffamily
   \color{brick} #2\fi}]}}
\newcommand{\sandor}{{\color{blue}{S\'andor \mdyydate\today}}}
\newenvironment{refmr}{}{}
%
\newcommand\james{M\hskip-.1ex\raise .575ex \hbox{\text{c}}\hskip-.075ex Kernan}

%
\renewcommand\thesubsection{\thesection.\Alph{subsection}}
\renewcommand\subsection{
  \renewcommand{\sfdefault}{phv}
  \@startsection{subsection}%
  {2}{0pt}{-\baselineskip}{.2\baselineskip}{\raggedright
    \sffamily\itshape\small
  }}
\renewcommand\section{
  \renewcommand{\sfdefault}{phv}
  \@startsection{section} %
  {1}{0pt}{\baselineskip}{.2\baselineskip}{\centering
    \sffamily
    \scshape
}}

\setlist[enumerate]{itemsep=3pt,topsep=3pt,leftmargin=2em,label={\rm (\roman*)}}
\newlist{enumalpha}{enumerate}{1}
\setlist[enumalpha]{itemsep=3pt,topsep=3pt,leftmargin=2em,label=(\alph*\/)}
\newcounter{parentthmnumber}
\setcounter{parentthmnumber}{0}
\newcounter{currentparentthmnumber}
\setcounter{currentparentthmnumber}{0}
\newcounter{nexttag}
\newcommand{\setnexttag}{%
  \setcounter{nexttag}{\value{enumi}}%
  \addtocounter{nexttag}{1}%
}
\newcommand{\placenexttag}{%
\tag{\roman{nexttag}}%
}

\newenvironment{thmlista}{%
\label{parentthma}
\begin{enumerate}
}{%
\end{enumerate}
}
\newlist{thmlistaa}{enumerate}{1}
\setlist[thmlistaa]{label=(\arabic*), ref=\autoref{parentthm}\thethm(\arabic*)}
\newcommand*{\parentthmlabeldef}{%
  \expandafter\newcommand
  \csname parentthm\the\value{parentthmnumber}\endcsname
}
\newcommand*{\ptlget}[1]{%
  \romannumeral-`\x
  \ltx@ifundefined{parentthm\number#1}{%
    \ltx@space
    \parentthmundefined
  }{%
    \expandafter\ltx@space
    \csname mymacro\number#1\endcsname
  }%
}  
\newcommand*{\parentthmundefined}{\textbf{??}}
\parentthmlabeldef{parentthm}
\newenvironment{thmlistr}{%
\label{parentthm}
\begin{thmlistrr}}{%
\end{thmlistrr}}
\newlist{thmlistrr}{enumerate}{1}
\setlist[thmlistrr]{label=(\roman*), ref=\autoref{parentthm}(\roman*)}
\newcounter{proofstep}%
\setcounter{proofstep}{0}%
\newcommand{\pstep}[1]{%
  \smallskip
  \noindent
  \emph{{\sc Step \arabic{proofstep}:} #1.}\addtocounter{proofstep}{1}}
\newcounter{lastyear}\setcounter{lastyear}{\the\year}
\addtocounter{lastyear}{-1}
\newcommand\sideremark[1]{%
\normalmarginpar
\marginpar
[
\hskip .45in
\begin{minipage}{.75in}
\tiny #1
\end{minipage}
]
{
\hskip -.075in
\begin{minipage}{.75in}
\tiny #1
\end{minipage}
}}
\newcommand\rsideremark[1]{
\reversemarginpar
\marginpar
[
\hskip .45in
\begin{minipage}{.75in}
\tiny #1
\end{minipage}
]
{
\hskip -.075in
\begin{minipage}{.75in}
\tiny #1
\end{minipage}
}}
\newcommand\Index[1]{{#1}\index{#1}}
\newcommand\inddef[1]{\emph{#1}\index{#1}}
\newcommand\noin{\noindent}
\newcommand\hugeskip{\bigskip\bigskip\bigskip}
\newcommand\smc{\sc}
\newcommand\dsize{\displaystyle}
\newcommand\sh{\subheading}
\newcommand\nl{\newline}
\newcommand\toappear{\rm (to appear)}
\newcommand\mycite[1]{[#1]}
\newcommand\myref[1]{(\ref{#1})}
\newcommand{\parref}[1]{\eqref{\bf #1}}
\newcommand\myli{\hfill\newline\smallskip\noindent{$\bullet$}\quad}
\newcommand\vol[1]{{\bf #1}\ } 
\newcommand\yr[1]{\rm (#1)\ } 
\newcommand\cf{cf.\ \cite}
\newcommand\mycf{cf.\ \mycite}
\newcommand\te{there exist\xspace}
\newcommand\st{such that\xspace}
\newcommand\CM{Cohen-Macaulay\xspace}
\newcommand\myskip{3pt}
\newtheoremstyle{bozont}{3pt}{3pt}%
     {\itshape}
     {}
     {\bfseries}
     {.}
     {.5em}
     {\thmname{#1}\thmnumber{ #2}\thmnote{ #3}}%
\newtheoremstyle{bozont-sub}{3pt}{3pt}%
     {\itshape}
     {}
     {\bfseries}
     {.}
     {.5em}
     {\thmname{#1}\ \arabic{section}.\arabic{thm}.\thmnumber{#2}\thmnote{ \rm #3}}
\newtheoremstyle{bozont-named-thm}{3pt}{3pt}%
     {\itshape}
     {}
     {\bfseries}
     {.}
     {.5em}
     {\thmname{#1}\thmnumber{#2}\thmnote{ #3}}
\newtheoremstyle{bozont-named-bf}{3pt}{3pt}%
     {}
     {}
     {\bfseries}
     {.}
     {.5em}
     {\thmname{#1}\thmnumber{#2}\thmnote{ #3}}
\newtheoremstyle{bozont-named-sf}{3pt}{3pt}%
     {}
     {}
     {\sffamily}
     {.}
     {.5em}
     {\thmname{#1}\thmnumber{#2}\thmnote{ #3}}
\newtheoremstyle{bozont-named-sc}{3pt}{3pt}%
     {}
     {}
     {\scshape}
     {.}
     {.5em}
     {\thmname{#1}\thmnumber{#2}\thmnote{ #3}}
\newtheoremstyle{bozont-named-it}{3pt}{3pt}%
     {}
     {}
     {\itshape}
     {.}
     {.5em}
     {\thmname{#1}\thmnumber{#2}\thmnote{ #3}}
\newtheoremstyle{bozont-sf}{3pt}{3pt}%
     {}
     {}
     {\sffamily}
     {.}
     {.5em}
     {\thmname{#1}\thmnumber{ #2}\thmnote{ \rm #3}}
\newtheoremstyle{bozont-sc}{3pt}{3pt}%
     {}
     {}
     {\scshape}
     {.}
     {.5em}
     {\thmname{#1}\thmnumber{ #2}\thmnote{ \rm #3}}
\newtheoremstyle{bozont-remark}{3pt}{3pt}%
     {}
     {}
     {\scshape}
     {.}
     {.5em}
     {\thmname{#1}\thmnumber{ #2}\thmnote{ \rm #3}}
\newtheoremstyle{bozont-subremark}{3pt}{3pt}%
     {}
     {}
     {\scshape}
     {.}
     {.5em}
     {\thmname{#1}\ \arabic{section}.\arabic{thm}.\thmnumber{#2}\thmnote{ \rm #3}}
\newtheoremstyle{bozont-def}{3pt}{3pt}%
     {}
     {}
     {\bfseries}
     {.}
     {.5em}
     {\thmname{#1}\thmnumber{ #2}\thmnote{ \rm #3}}
\newtheoremstyle{bozont-reverse}{3pt}{3pt}%
     {\itshape}
     {}
     {\bfseries}
     {.}
     {.5em}
     {\thmnumber{#2.}\thmname{ #1}\thmnote{ \rm #3}}
\newtheoremstyle{bozont-reverse-sc}{3pt}{3pt}%
     {\itshape}
     {}
     {\scshape}
     {.}
     {.5em}
     {\thmnumber{#2.}\thmname{ #1}\thmnote{ \rm #3}}
\newtheoremstyle{bozont-reverse-sf}{3pt}{3pt}%
     {\itshape}
     {}
     {\sffamily}
     {.}
     {.5em}
     {\thmnumber{#2.}\thmname{ #1}\thmnote{ \rm #3}}
\newtheoremstyle{bozont-remark-reverse}{3pt}{3pt}%
     {}
     {}
     {\sc}
     {.}
     {.5em}
     {\thmnumber{#2.}\thmname{ #1}\thmnote{ \rm #3}}
\newtheoremstyle{bozont-def-reverse}{3pt}{3pt}%
     {}
     {}
     {\bfseries}
     {.}
     {.5em}
     {\thmnumber{#2.}\thmname{ #1}\thmnote{ \rm #3}}
\newtheoremstyle{bozont-def-newnum-reverse}{3pt}{3pt}%
     {}
     {}
     {\bfseries}
     {}
     {.5em}
     {\thmnumber{#2.}\thmname{ #1}\thmnote{ \rm #3}}
\newtheoremstyle{bozont-def-newnum-reverse-plain}{3pt}{3pt}%
   {}
   {}
   {}
   {}
   {.5em}
   {\thmnumber{\!(#2)}\thmname{ #1}\thmnote{ \rm #3}}
\newtheoremstyle{bozont-number}{3pt}{3pt}%
   {}
   {}
   {}
   {}
   {0pt}
   {\thmnumber{\!(#2)} }
\newtheoremstyle{bozont-step}{3pt}{3pt}%
   {\itshape}
   {}
   {\scshape}
   {}
   {.5em}
   {$\boxed{\text{\sc \thmname{#1}~\thmnumber{#2}:\!}}$}
\theoremstyle{bozont}    
\ifnum \value{are-there-sections}=0 {%
  \basetheorem{proclaim}{Theorem}{}
} 
\else {%
  \basetheorem{proclaim}{Theorem}{section}
} 
\fi
\maketheorem{thm}{proclaim}{Theorem}
\maketheorem{mainthm}{proclaim}{Main Theorem}
\maketheorem{cor}{proclaim}{Corollary} 
\maketheorem{cors}{proclaim}{Corollaries} 
\maketheorem{lem}{proclaim}{Lemma} 
\maketheorem{prop}{proclaim}{Proposition} 
\maketheorem{conj}{proclaim}{Conjecture}
\basetheorem{subproclaim}{Theorem}{proclaim}
\maketheorem{sublemma}{subproclaim}{Lemma}
\newenvironment{sublem}{%
\setcounter{sublemma}{\value{equation}}
\begin{sublemma}}
{\end{sublemma}}
\theoremstyle{bozont-sub}
\maketheorem{subthm}{equation}{Theorem}
\maketheorem{subcor}{equation}{Corollary} 
\maketheorem{subprop}{equation}{Proposition} 
\maketheorem{subconj}{equation}{Conjecture}
\theoremstyle{bozont-named-thm}
\maketheorem{namedthm}{proclaim}{}
\theoremstyle{bozont-sc}
\newtheorem{proclaim-special}[proclaim]{\specialthmname}
\newenvironment{proclaimspecial}[1]
     {\def\specialthmname{#1}\begin{proclaim-special}}
     {\end{proclaim-special}}
\theoremstyle{bozont-subremark}
\maketheorem{subrem}{equation}{Remark}
\maketheorem{subnotation}{equation}{Notation} 
\maketheorem{subassume}{equation}{Assumptions} 
\maketheorem{subobs}{equation}{Observation} 
\maketheorem{subexample}{equation}{Example} 
\maketheorem{subex}{equation}{Exercise} 
\maketheorem{inclaim}{equation}{Claim} 
\maketheorem{subquestion}{equation}{Question}
\theoremstyle{bozont-remark}
\basetheorem{subremark}{Remark}{proclaim}
\makeremark{subclaim}{subremark}{Claim}
\maketheorem{rem}{proclaim}{Remark}
\maketheorem{claim}{proclaim}{Claim} 
\maketheorem{notation}{proclaim}{Notation} 
\maketheorem{assume}{proclaim}{Assumptions} 
\maketheorem{obs}{proclaim}{Observation} 
\maketheorem{example}{proclaim}{Example} 
\maketheorem{examples}{proclaim}{Examples} 
\maketheorem{complem}{equation}{Complement}
\maketheorem{const}{proclaim}{Construction}   
\maketheorem{ex}{proclaim}{Exercise} 
\newtheorem{case}{Case} 
\newtheorem{subcase}{Subcase}   
\newtheorem{step}{Step}
\newtheorem{approach}{Approach}
\maketheorem{Fact}{proclaim}{Fact}
\newtheorem{fact}{Fact}
\newtheorem*{SubHeading*}{\SubHeadingName}%
\newtheorem{SubHeading}[proclaim]{\SubHeadingName}
\newtheorem{sSubHeading}[equation]{\sSubHeadingName}
\newenvironment{demo}[1] {\def\SubHeadingName{#1}\begin{SubHeading}}
  {\end{SubHeading}}%
\newenvironment{subdemo}[1]{\def\sSubHeadingName{#1}\begin{sSubHeading}}
  {\end{sSubHeading}} %
\newenvironment{demo-r}[1]{\def\SubHeadingName{#1}\begin{SubHeading-r}}
  {\end{SubHeading-r}}%
\newenvironment{subdemo-r}[1]{\def\sSubHeadingName{#1}\begin{sSubHeading-r}}
  {\end{sSubHeading-r}} %
\newenvironment{demo*}[1]{\def\SubHeadingName{#1}\begin{SubHeading*}}
  {\end{SubHeading*}}%
\maketheorem{defini}{proclaim}{Definition}
\maketheorem{question}{proclaim}{Question}
\maketheorem{crit}{proclaim}{Criterion}
\maketheorem{pitfall}{proclaim}{Pitfall}
\maketheorem{addition}{proclaim}{Addition}
\maketheorem{principle}{proclaim}{Principle} 
\maketheorem{condition}{proclaim}{Condition}
\maketheorem{exmp}{proclaim}{Example}
\maketheorem{hint}{proclaim}{Hint}
\maketheorem{exrc}{proclaim}{Exercise}
\maketheorem{prob}{proclaim}{Problem}
\maketheorem{ques}{proclaim}{Question}    
\maketheorem{alg}{proclaim}{Algorithm}
\maketheorem{remk}{proclaim}{Remark}          
\maketheorem{note}{proclaim}{Note}            
\maketheorem{summ}{proclaim}{Summary}         
\maketheorem{notationk}{proclaim}{Notation}   
\maketheorem{warning}{proclaim}{Warning}  
\maketheorem{defn-thm}{proclaim}{Definition--Theorem}  
\maketheorem{convention}{proclaim}{Convention}  
\maketheorem{hw}{proclaim}{Homework}
\maketheorem{hws}{proclaim}{\protect{${\mathbb\star}$}Homework}
\newtheorem*{ack}{Acknowledgment}
\newtheorem*{acks}{Acknowledgments}
\theoremstyle{bozont-number}
\newtheorem{say}[proclaim]{}
\newtheorem{subsay}[equation]{}
\theoremstyle{bozont-def}    
\maketheorem{defn}{proclaim}{Definition}
\maketheorem{subdefn}{equation}{Definition}
\theoremstyle{bozont-reverse}    
\maketheorem{corr}{proclaim}{Corollary} 
\maketheorem{lemr}{proclaim}{Lemma} 
\maketheorem{propr}{proclaim}{Proposition} 
\maketheorem{conjr}{proclaim}{Conjecture}
\theoremstyle{bozont-remark-reverse}
\newtheorem{SubHeading-r}[proclaim]{\SubHeadingName}
\newtheorem{sSubHeading-r}[equation]{\sSubHeadingName}
\newtheorem{SubHeadingr}[proclaim]{\SubHeadingName}
\newtheorem{sSubHeadingr}[equation]{\sSubHeadingName}
\theoremstyle{bozont-reverse-sc}
\newtheorem{proclaimr-special}[proclaim]{\specialthmname}
\newenvironment{proclaimspecialr}[1]%
{\def\specialthmname{#1}\begin{proclaimr-special}}%
{\end{proclaimr-special}}
\theoremstyle{bozont-remark-reverse}
\maketheorem{remr}{proclaim}{Remark}
\maketheorem{subremr}{equation}{Remark}
\maketheorem{notationr}{proclaim}{Notation} 
\maketheorem{assumer}{proclaim}{Assumptions} 
\maketheorem{obsr}{proclaim}{Observation} 
\maketheorem{exampler}{proclaim}{Example} 
\maketheorem{exr}{proclaim}{Exercise} 
\maketheorem{claimr}{proclaim}{Claim} 
\maketheorem{inclaimr}{equation}{Claim} 
\maketheorem{definir}{proclaim}{Definition}
\theoremstyle{bozont-def-newnum-reverse}    
\maketheorem{newnumr}{proclaim}{}
\theoremstyle{bozont-def-newnum-reverse-plain}
\maketheorem{newnumrp}{proclaim}{}
\theoremstyle{bozont-def-reverse}    
\maketheorem{defnr}{proclaim}{Definition}
\maketheorem{questionr}{proclaim}{Question}
\newtheorem{newnumspecial}[proclaim]{\specialnewnumname}
\newenvironment{newnum}[1]{\def\specialnewnumname{#1}\begin{newnumspecial}}{\end{newnumspecial}}
\theoremstyle{bozont-step}
\newtheorem{bstep}{Step}
\newcounter{thisthm} 
\newcounter{thissection} 
\newcommand{\ilabel}[1]{%
  \newcounter{#1}%
  \setcounter{thissection}{\value{section}}%
  \setcounter{thisthm}{\value{proclaim}}%
  \label{#1}}
\newcommand{\iref}[1]{%
  (\the\value{thissection}.\the\value{thisthm}.\ref{#1})}
\newcounter{lect}
\setcounter{lect}{1}
\newcommand\lecture{\newpage\centerline{\sfbf Lecture \arabic{lect}}
\addtocounter{lect}{1}}
\newcounter{topic}
\setcounter{topic}{1}
\newenvironment{topic}
{\noindent{\sc Topic 
\arabic{topic}:\ }}{\addtocounter{topic}{1}\par}
\counterwithin{equation}{proclaim}
\counterwithin{figure}{section} 
\newcommand\equinsect{\numberwithin{equation}{section}}
\newcommand\equinthm{\numberwithin{equation}{proclaim}}
\newcommand\figinthm{\numberwithin{figure}{proclaim}}
\newcommand\figinsect{\numberwithin{figure}{section}}
\newenvironment{sequation}{%
\setcounter{equation}{\value{thm}}
\numberwithin{equation}{section}%
\begin{equation}%
}{%
\end{equation}%
\numberwithin{equation}{proclaim}%
\addtocounter{proclaim}{1}%
}
\newcommand{\num}{\arabic{section}.\arabic{proclaim}}
\newenvironment{pf}{\smallskip \noindent {\sc Proof. }}{\qed\smallskip}
\newenvironment{enumerate-p}{
  \begin{enumerate}}
  {\setcounter{equation}{\value{enumi}}\end{enumerate}}
\newenvironment{enumerate-cont}{
  \begin{enumerate}
    {\setcounter{enumi}{\value{equation}}}}
  {\setcounter{equation}{\value{enumi}}
  \end{enumerate}}
\let\lenumi\labelenumi
\newcommand{\rmlabels}{\renewcommand{\labelenumi}{\rm \lenumi}}
\newcommand{\rmlabelsoff}{\renewcommand{\labelenumi}{\lenumi}}
\newenvironment{heading}{\begin{center} \sc}{\end{center}}
\newcommand\subheading[1]{\smallskip\noindent{{\bf #1.}\ }}
\newlength{\swidth}
\setlength{\swidth}{\textwidth}
\addtolength{\swidth}{-,5\parindent}
\newenvironment{narrow}{
  \medskip\noindent\hfill\begin{minipage}{\swidth}}
  {\end{minipage}\medskip}
\newcommand\nospace{\hskip-.45ex}
\newcommand{\sfbf}{\sffamily\bfseries}
\newcommand{\sfbfs}{\sffamily\bfseries\small}
\newcommand{\twidle}{\textasciitilde}
\makeatother

\newcounter{stepp}
\setcounter{stepp}{0}
\newcommand{\nextstep}[1]{%
  \addtocounter{stepp}{1}%
  \begin{bstep}%
    {#1}
  \end{bstep}%
  \noindent%
}
\newcommand{\resetsteps}{\setcounter{stepp}{0}}

\title
{Non-Cohen-Macaulay canonical singularities} 
\author{S\'andor J Kov\'acs} 
\date{\usdate\today} 
\thanks{Supported in part by NSF Grant 
DMS-1565352 and the Craig McKibben and Sarah Merner Endowed Professorship in
Mathematics at the University of Washington.} 
\address{University of Washington, Department of Mathematics, 
Seattle, WA 98195, USA}
\email{skovacs@uw.edu} 
\urladdr{http://www.math.washington.edu/$\sim$kovacs}
\maketitle
\setcounter{tocdepth}{1}
\vskip-1em
\centerline{\it Dedicated to Lawrence Ein on the occasion of his 60th birthday}

\numberwithin{equation}{section}

\section{Introduction}
\noindent
Since the first counter-example to Kodaira vanishing in positive characteristic was
constructed by Raynaud \cite{MR541027} many other counter-examples have been found
satisfying various prescribed properties
\cite{MR894379,MR972344,MR1128214,Kollar96,MR1385284,MR3079312,dCF15,CT16b,CT16}.  An
elementary counter-example for which the line bundle violating Kodaira vanishing is
very ample was constructed by Lauritzen and Rao in \cite{LR97}. Let us denote it by
$X$. It is straightforward from the construction that $X$ is a rational variety and
for $p=2$ and $\dim X=6$ it is Fano.  Let $Z$ denote the cone over $X$ using the
embedding given by the global sections of the very ample line bundle violating
Kodaira vanishing.  It is well-known that a cone over a Fano variety has klt
singularities if $K_Z$ is $\bQ$-Cartier. (See \autoref{defs}.)
The failure of Kodaira vanishing on $X$ implies that $Z$ will not have \CM
singularities, in particular it does not have rational singularities.
%
As pointed out by 
Esnault and 
Koll\'ar, although in this example $K_Z$ is not $\bQ$-Cartier, one can easily find a
boundary $\Delta$ on $Z$ that makes $K_Z+\Delta$ $\bQ$-Cartier, and hence the pair
$(Z,\Delta)$ klt.  In other words Lauritzen and Rao's counter-example to Kodaira
vanishing produces a klt pair $(Z,\Delta)$ such that $Z$ is not \CM. This provides a
counter-example to the positive characteristic analogue of Elkik's theorem
\cite{Elkik81}, \cite[5.22]{KM98}.
Examples of non-\CM klt singularities were also given by Yasuda in \cite{MR3230848}
and Cascini and Tanaka in \cite{CT16b}.

We will show that one can use the above $X$ to produce even more interesting
singularities.  I will demonstrate below that in fact the very ample line bundle
$\omega_X^{-2}$ also violates Kodaira vanishing and hence leads to a cone, using the
polarization given by $\omega_X^{-1}$, whose canonical sheaf is a line bundle, has
canonical singularities, and is not \CM. Of course, then it also does not have
rational singularities. In other words, the purpose of this note is to prove the
following.

\begin{thm}\label{thm:main1}
  Let $k$ be a field of characteristic $2$. Then there exists a Fano variety $X$ over
  $k$ such that
  \begin{enumerate}
  \item $\dim X=6$,
  \item $\omega_X^{-1}$ is very ample, and
  \item $\omega_X^{-2}$ violates Kodaira vanishing.
  \end{enumerate}
\end{thm}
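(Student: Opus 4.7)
The strategy is to take $X$ to be the explicit $6$-dimensional Fano variety over $k$ constructed by Lauritzen and Rao \cite{LR97} in characteristic $2$. Their construction automatically gives (i). It also produces a very ample line bundle $L$ on $X$ violating Kodaira vanishing, so to obtain (ii) I would first verify that $L = \omega_X^{-1}$. Since $X$ is presented in \cite{LR97} as an explicit projective bundle (or incidence variety) over a flag variety, both the tautological polarization and the anticanonical class are directly computable from the construction data, and this identification should be routine bookkeeping.

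For part (iii), Kodaira-vanishing failure for $\omega_X^{-2}$ amounts to
\[
H^i\bigl(X,\omega_X \otimes \omega_X^{-2}\bigr) = H^i\bigl(X, \omega_X^{-1}\bigr) \neq 0 \quad\text{for some } i>0.
\]
The plan is to exhibit such a class by exploiting characteristic $2$. Since $p=2$, the absolute Frobenius $F\colon X \to X$ satisfies $F^* \omega_X^{-1} = \omega_X^{-2}$, and projection formula together with the finiteness of $F$ give
\[
H^i\bigl(X,\omega_X^{-2}\bigr) \;=\; H^i\bigl(X, \omega_X^{-1} \otimes F_* \cO_X\bigr).
\]
I would combine this identity with the bundle structure of $X$ and the Leray spectral sequence for the bundle projection, to reduce the desired non-vanishing to a cohomological computation on the base of the Lauritzen--Rao bundle --- a computation of the same nature as the one producing the original counter-example, but with an additional $\omega_X^{-1}$ twist.

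The hard part is exactly this twisted non-vanishing. The original Lauritzen--Rao argument produces $H^{>0}(X,\cO_X) \neq 0$; what one must show here is that an analogous argument yields $H^{>0}(X,\omega_X^{-1}) \neq 0$. Provided the bundle description of $X$ is transparent enough --- which is the whole point of the Lauritzen--Rao setup --- this should reduce to a single explicit sheaf-cohomology calculation on the base, and that calculation will be the heart of the note. Once this non-vanishing is in hand, (i), (ii) and (iii) all hold for the chosen $X$, completing the proof.
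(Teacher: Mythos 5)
Your high-level plan --- take $X$ to be the Lauritzen--Rao variety with $p=2$, $n=3$, check that $\omega_X^{-1}$ is very ample, and reduce (iii) via Serre duality to showing $H^{j}(X,\omega_X^{-1})\neq 0$ for some $j>0$ by pushing down to the base of the bundle --- is the route the paper takes. But two of your intermediate claims are off. First, the very ample bundle $L$ of Lauritzen--Rao is \emph{not} $\omega_X^{-1}$: writing $\pi\colon X=\bP(F^*\sG')\to Y$ for their bundle over the incidence variety $Y\subset\bP^3\times\bP^3$, one has $L=\pi^*\sO_Y(1,1)\otimes\sO_\pi(1)$ while $\omega_X^{-1}\simeq\pi^*\sO_Y(1,1)\otimes\sO_\pi(2)$; very ampleness of the latter still follows because $\sO_\pi(1)$ is globally generated, but the identification you propose to ``verify'' is false. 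Second, the Frobenius step is a non sequitur: the group you must show to be nonzero is $H^j(X,\omega_X^{-1})$ for some $j>0$ (as you correctly state at the start of that paragraph), whereas the projection-formula identity you write down computes $H^i(X,\omega_X^{-2})$, which is not the relevant group and plays no role afterwards. The Frobenius that genuinely enters is the Frobenius of the base $Y$, which is built into the definition $X=\bP(F^*\sG')$ itself.

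The genuine gap is that the ``single explicit sheaf-cohomology calculation'' you defer is the entire theorem. After pushing forward along $\pi$, one must show $H^1(Y,\sO_Y(1,5)\otimes\sym^2F^*\sG)\neq 0$, and nothing about this is formal. The paper devotes a string of lemmas to it: computing $H^i(Y,\sO_Y(a,b))$ from the ideal-sheaf sequence of $Y$ in $\bP^3\times\bP^3$ and K\"unneth; expressing $H^1(Y,\sO_Y(a,b)\otimes F^*\sB)$ and $H^1(Y,\sO_Y(a,b)\otimes\sym^2 F^*\sB)$ as cokernels of the explicit maps $\eta_1=[\,y_i^p\,]$ and $\eta_2=[\,y_i^py_j^p\,]$ on global sections; comparing $\sym^2F^*\sB$ with $\sym^2F^*\sG$ through a two-step filtration; and finally exhibiting the strict inclusion $\im\eta_2\subsetneq\im\eta_1$ --- the one place where positive characteristic is used, via the fact that the sections of $\sO_Y(0,p)$ are not spanned by $p$-th powers. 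Without an argument for this nonvanishing your proposal establishes (i) and (ii) but not (iii).
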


One might ask if there is a similar example in smaller dimensions. It follows from
\cite[A.1]{CT16} that there are no Del Pezzo surfaces with this property. Hence a
similar example in smaller dimension would be at least $3$-dimensional.
One might also ask if there is a similar example where $\omega_X^{-1}$ violates
Kodaira vanishing. The example here is certainly not such and it is well-known that
no such example exists for $\dim X=2,3$ \cite{MR1440723,MR2347424,MR3493588}. While
this is an interesting question, it is irrelevant for the purposes of the present
article.
The more interesting question is whether there are similar examples in all positive
characteristics. 

My main interest in the above result lies in the following application.  By taking
the cone over $X$ given by the embedding induced by the global sections of
$\omega_X^{-1}$ we obtain the following.

\begin{thm}\label{thm:main2}
  Let $k$ be a field of characteristic $2$. Then there exists a variety $Z$ over $k$
  with the following properties:
  \begin{enumalpha}
  \item $Z$ is of dimension $7$, 
    has a single isolated canonical singularity, and
    admits a resolution of singularities by a smooth variety over $k$,
  \item $\omega_Z$ is a line bundle,
  \item $Z$ is not \CM, in particular, $Z$ is not Gorenstein and does not have
    rational singularities.
  \end{enumalpha}
\end{thm}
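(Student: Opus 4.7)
The plan is to take $Z$ to be the affine cone over $X$ with respect to the polarization $L := \omega_X^{-1}$, that is, $Z := \Spec R$ with $R := \bigoplus_{m \geq 0} H^0(X, L^m)$. Then $\dim Z = \dim X + 1 = 7$, and since $X$ is smooth and $L$ is ample, $Z$ has a single isolated singularity at the vertex $p$. Blowing up $p$ yields a resolution $f\colon Y \to Z$ in which $Y$ is the total space of the line bundle $L^{-1} = \omega_X$ over $X$, so $Y$ is smooth, and the exceptional divisor $E \subset Y$ is the zero section, canonically identified with $X$, with normal bundle $N_{E/Y} \cong \omega_X$.

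For the statement in (b) that $\omega_Z$ is a line bundle, I would apply the standard graded-canonical-module computation for cones. The formula $H^i_\frm(R) \cong \bigoplus_{n \in \bZ} H^{i-1}(X, L^n)$ for $i \geq 2$, combined with Serre duality on $X$, gives
\[
  [\omega_R]_n \cong H^0(X, \omega_X \otimes L^n) = H^0(X, L^{n-1}) = R_{n-1},
\]
so $\omega_R \cong R(-1)$ as graded $R$-modules. Forgetting the grading, $\omega_R$ is free of rank one, hence $\omega_Z$ is locally free of rank one at $p$, and therefore on all of $Z$.

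For the canonicity at $p$, I would compute the discrepancy of $E$. Since $Y$ is the total space of $\omega_X$, the relative dualizing sheaf is $\omega_{Y/X} \cong \pi^*\omega_X^{-1}$ for the projection $\pi\colon Y \to X$, and hence $\omega_Y \cong \pi^*(\omega_X \otimes \omega_X^{-1}) \cong \cO_Y$. Writing $K_Y = f^*K_Z + aE$ and restricting to $E \cong X$, the left-hand side becomes trivial, whereas the right-hand side reduces to $a\cdot N_{E/Y} = a\,\omega_X$ (as $f|_E$ is constant). Since $\omega_X$ is non-torsion in $\Pic(X)$ (being the inverse of an ample line bundle), this forces $a = 0$, so $Z$ has a canonical (in fact, strictly canonical) singularity, completing (a).

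Finally, for (c), the failure of Kodaira vanishing for $\omega_X^{-2}$ asserted in \autoref{thm:main1} gives $H^i(X, \omega_X \otimes \omega_X^{-2}) = H^i(X, L) \neq 0$ for some $i > 0$. Since $\omega_X^2 = L^{-2}$ is anti-ample, Serre duality forces $H^6(X, L) = H^0(X, \omega_X^2)^\vee = 0$, so in fact $0 < i < 6$. The local cohomology formula then yields $H^{i+1}_\frm(R) \neq 0$ with $2 \leq i+1 < 7 = \dim R$, so $R$, and hence $Z$, is not \CM at $\frm$. Together with (b), this precludes $Z$ from being Gorenstein or from having rational singularities. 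The delicate point, and what I expect to be the main obstacle, is that $\omega_Z$ is a genuine line bundle even though $Z$ fails to be Cohen-Macaulay; the graded-canonical-module computation via local duality on a graded ring handles this cleanly, regardless of the depth of $R$.
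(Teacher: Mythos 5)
Your proposal is correct, and it starts from exactly the same object as the paper: the affine cone $Z=C_a(X,\omega_X^{-1})$ over the Fano variety of \autoref{thm:main1}. Where you differ is in how the required properties are verified. The paper's proof is a two-line citation: non-\CM{}ness comes from \autoref{say:CM}/\autoref{say:not-CM}, while the invertibility of $\omega_Z$ and canonicity come from \autoref{prop:klt}, whose route to ``canonical'' is indirect --- first produce a boundary $\Delta$ making $(Z,\Delta)$ klt via \cite[3.1]{SingBook}, then observe that when $\omega_X\simeq\sL^q$ the sheaf $\omega_Z$ is a line bundle, so klt together with $K_Z$ Cartier of index $1$ forces all discrepancies (which are then integers $>-1$) to be $\geq 0$. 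You instead verify everything by hand: graded local duality gives $\omega_R\simeq R(-1)$, hence $\omega_Z$ invertible without any \CM hypothesis; and you compute the discrepancy of the exceptional divisor of the natural resolution $Y=\Spec_X\bigl(\bigoplus_m \sL^m\bigr)\to Z$ directly, finding $K_Y=f^*K_Z+0\cdot E$, so that $f$ is crepant and canonicity follows from smoothness of $Y$ (this also shows the singularity is canonical but not terminal, which the paper does not record). Both arguments are sound; the paper's is shorter because it leans on \cite[3.1, 3.11, 3.14]{SingBook}, while yours is self-contained and makes explicit why $\omega_Z$ being a line bundle does not require $Z$ to be \CM --- precisely the point that makes the example interesting. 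Two small things worth making explicit in your write-up: the identification of the blow-up of the vertex with the total space of $\sL^{-1}$ is cleanest if you simply take $Y=\Spec_X\bigl(\bigoplus_{m\ge 0}\sL^m\bigr)$ as the resolution rather than the blow-up of the maximal ideal (they agree when $R$ is generated in degree one, but you never need that); and the step from ``$a(E,Z)=0$ with $Y$ smooth'' to ``$Z$ is canonical'' uses the standard fact that discrepancies over $Z$ are computed from discrepancies over $Y$ plus the coefficients on $f$-exceptional divisors, cf.\ \cite[2.30]{KM98}.
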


Again, one might ask if there are such singularities in smaller dimensions. Of
course, if one finds examples such as in \autoref{thm:main1} in smaller dimensions,
that would provide smaller dimensional examples for \autoref{thm:main2} as
well. However, as mentioned above there are no examples similar to
\autoref{thm:main1} in dimension $2$ which makes it an interesting question whether
there exist $3$-dimensional canonical singularities, perhaps even of index $1$, that
are not \CM.
And again, the possibly more interesting question is whether there are such
singularities in all positive characteristics.  

\noindent{\smc Note added in proof.}
While this paper was under review the above question has been answered.  Bernasconi
gave examples of non-\CM klt singularities in characteristic $3$ in \cite{Ber17} and
Totaro and Yasuda gave examples of non-\CM terminal singularities in all positive
characteristics in \cite{Tot17} and \cite{Yas17}.

In the opposite direction Hacon and Witaszek \cite{HW17} recently proved that in
dimension $3$ klt singularities are rational if the characteristic of the base field
is sufficiently large.

\ack{I am grateful to J\'anos Koll\'ar, Hiromu Tanaka, Burt Totaro, Takehiko Yasuda,
  and to the referee for useful comments.}

\section{Non-\CM singularities via failure of Kodaira vanishing}
\noindent
%
\begin{defini}\label{defs}
  Let $X$ be a smooth projective variety over $k$ and $\sL$ an ample line bundle on
  $X$. Then we will say that $\sL$ \emph{violates Kodaira vanishing} if there exists
  an $i<\dim X$ such that $H^i(X,\sL^{-1})\neq 0$.  By Serre duality this is
  equivalent to that $H^{\dim X-i}(X,\sL\otimes\omega_X)\neq 0$.

  The canonical divisor of a normal variety $Z$ is denoted, as usual, by $K_Z$ and
  the associated reflexive sheaf of rank $1$, the canonical sheaf, is denoted by
  $\omega_Z$. I.e., $\omega_Z\simeq \sO_Z(K_Z)$. A Weil divisor $D$ on $Z$ is
  $\bQ$-Cartier if there exists a non-zero $m\in \bN$ such that $mD$ is Cartier.
  A normal variety $Z$ is said to have \emph{rational singularities} if for a
  resolution of singularities $\phi:\wt Z\to Z$ the following conditions hold:
  \begin{enumerate}
  \item\label{item:4} $\myR^i\phi_*\sO_{\wt Z}=0$ for $i>0$, and
  \item\label{item:5} $\myR^if_*\omega_{\wt Z}=0$ for $i>0$.
  \end{enumerate} 
  In characteristic $0$ \autoref{item:5} is automatic by the Grauert-Riemenschneider
  vanishing theorem \cite{Gra-Rie70b}, \cite[2.68]{KM98}.
  For the definition of \emph{klt} and \emph{canonical} singularities the reader is
  referred to \cite[2.8]{SingBook}.
\end{defini}

Rational singularities are \CM by the following well-known lemma. 
 A very short proof is included for the convenience of the reader.

\begin{lem}\label{lem:rtl-is-CM}
  Let $Z$ be a scheme with rational singularities. Then $Z$ is \CM.
\end{lem}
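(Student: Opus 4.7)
The plan is to use Grothendieck duality to translate the two vanishing conditions in \autoref{defs} into a statement about the dualizing complex $\omega_Z^\bullet$, and then invoke the standard characterization that $Z$ is \CM if and only if $\omega_Z^\bullet$ is concentrated in a single cohomological degree.

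First I would let $\phi\colon \tilde Z\to Z$ be a resolution of singularities, with $\tilde Z$ smooth of dimension $n=\dim Z$. Condition \autoref{item:4} of \autoref{defs} says precisely that the natural morphism $\sO_Z\to \myR\phi_*\sO_{\tilde Z}$ is a quasi-isomorphism. Applying $\myR\sHom(\blank,\omega_Z^\bullet)$ to this quasi-isomorphism and combining with Grothendieck duality for the proper morphism $\phi$, one obtains
\[
  \omega_Z^\bullet \;\simeq\; \myR\sHom(\myR\phi_*\sO_{\tilde Z},\omega_Z^\bullet)
  \;\simeq\; \myR\phi_*\myR\sHom(\sO_{\tilde Z},\phi^!\omega_Z^\bullet)
  \;\simeq\; \myR\phi_*\omega_{\tilde Z}^\bullet.
\]
Since $\tilde Z$ is smooth of dimension $n$, $\omega_{\tilde Z}^\bullet\simeq \omega_{\tilde Z}[n]$, so this reads $\omega_Z^\bullet\simeq \myR\phi_*\omega_{\tilde Z}[n]$.

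Now condition \autoref{item:5} of \autoref{defs} says that $\myR^i\phi_*\omega_{\tilde Z}=0$ for $i>0$, i.e., $\myR\phi_*\omega_{\tilde Z}$ is concentrated in degree $0$ and equals $\phi_*\omega_{\tilde Z}$. Plugging this back in gives
\[
  \omega_Z^\bullet \;\simeq\; \phi_*\omega_{\tilde Z}\,[n],
\]
so that $\omega_Z^\bullet$ has cohomology only in degree $-n$. The conclusion then follows from the standard fact (e.g.\ via local duality) that a noetherian scheme of pure dimension $n$ is \CM exactly when its dualizing complex is quasi-isomorphic to a single coherent sheaf placed in degree $-n$.

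The only non-trivial step is the application of Grothendieck duality; once that is in hand, the rest is formal. (The identification of $\omega_Z$ with $\phi_*\omega_{\tilde Z}$ produced as a byproduct is also worth recording, though it is not strictly needed for the \CM conclusion.)
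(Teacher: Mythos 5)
Your proof is correct and is essentially the paper's own argument: both run the chain $\omega_Z^\bullet\simeq\myR\sHom_Z(\myR\phi_*\sO_{\tilde Z},\omega_Z^\bullet)\simeq\myR\phi_*\omega_{\tilde Z}[n]$ via Grothendieck duality and condition (i), then use condition (ii) to see the dualizing complex is concentrated in a single degree. The only cosmetic difference is that the paper identifies $\omega_Z\simeq\myR\phi_*\omega_{\tilde Z}$ up front and reads the same isomorphisms in the other direction, whereas you obtain that identification as a byproduct.
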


\begin{proof}
  Let $d=\dim Z$ and let $\phi:\wt Z\to Z$ be a resolution of singularities of
  $Z$. This implies that $\sO_Z\simeq \myR\phi_*\sO_{\wt Z}$ and $\omega_Z\simeq
  \myR\phi_*\omega_{\wt Z}$. Then by Grothendieck duality
  \[
  \omega_Z[d]\simeq %
  \myR\phi_*\omega_{\wt Z}[d]\simeq %
  \myR\phi_*\myR\sHom_{\wt Z}(\sO_{\wt Z}, \dcx {\wt Z}) \simeq %
  \myR\sHom_{Z}(\myR\phi_*\sO_{\wt Z}, \dcx {Z})\simeq %
  \dcx {Z},
  \]
  and hence $Z$ is \CM.
\end{proof}

\begin{rem}
  It follows easily that if $Z$ is not \CM, then for any resolution of singularities
  $\phi:\wt Z\to Z$, there exists an $i>0$ such that either $\myR^i\phi_*\sO_{\wt
    Z}\neq 0$ or $\myR^i\phi_*\omega_{\wt Z}\neq 0$.
\end{rem}

\noindent
Next I will review the more-or-less well-known idea of constructing non-\CM
singularities as cones over varieties violating Kodaira vanishing.

\begin{say}\label{say:construction}
  Let $X$ be a normal projective variety over a field $k$ of characteristic $p>0$,
  $\sL$ an ample line bundle on $X$, and
  $Z=C_a(X,\sL)=\Spec\left(\oplus_{m\geq 0} H^0(X, \sL^m)\right)$ the \emph{affine
    cone} over $X$ with conormal bundle $\sL$. (Here we follow the convention of
  \cite[3.8]{SingBook} on cones.)
\end{say}

\noindent
Then we have the following well-known criterion
cf.~\cite[3.11]{SingBook}: 
\begin{say}\label{say:CM}
  $Z$ is \CM if and only if $H^i(X,\sL^{q})=0$ for all $0<i<\dim X$ and $q\in \bZ$.
\end{say}

\noindent
This implies for example that cones over varieties whose structure sheaves have
non-trivial \emph{middle} cohomology, for instance abelian varieties of dimension at
least $2$, are not \CM. 
It also implies that
\begin{say}\label{say:not-CM}
  if some power of $\sL$ violates Kodaira vanishing, then $Z$ is not \CM.
\end{say}

\noindent
Next recall that the canonical divisor of a canonical singularity is $\bQ$-Cartier
and observe that in the above construction
\begin{say}\label{say:Q-Gor}
  if $\omega_X^{r}\simeq \sL^{q}$ for some $r,q\in\bZ$, $r\neq 0$, then $K_Z$ is
  $\bQ$-Cartier of index at most $r$.
\end{say}

However, even if \eqref{say:Q-Gor} fails, $Z$ may still provide an example of a klt
singularity with an appropriate boundary as we will see in the next statement, which
summarizes what we found in this section.
Note that this statement is a simple consequence of the combination of \cite[3.1,
3.11]{SingBook}.

\begin{prop}\label{prop:klt}
  In addition to the definitions in \eqref{say:construction} assume that
  $X$ is a smooth Fano variety
  and that some power of $\sL$ violates Kodaira vanishing.  Then there exists a
  $\bQ$-divisor $\Delta$ on $Z$ such that
  \begin{enumerate}
  \item\label{item:2} $(Z,\Delta)$ has klt singularities,
  \item\label{item:1} $Z$ is not \CM, and hence in particular has non-rational
    singularities, and 
  \item\label{item:3} if $\omega_X\simeq \sL^q$ for some $q\in\bZ$, then $Z$ has
    canonical singularities.
  \end{enumerate}
\end{prop}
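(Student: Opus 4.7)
The plan is a direct application of the cone criteria in \cite[3.1, 3.11]{SingBook}, after constructing a suitable boundary on $X$.

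Part (ii) is immediate: by hypothesis some power of $\sL$ violates Kodaira vanishing, so \eqref{say:not-CM} gives that $Z$ is not \CM, and then \autoref{lem:rtl-is-CM} rules out rational singularities.

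For (i), I would first build a small effective $\bQ$-divisor $\Delta_X$ on $X$ making $(X,\Delta_X)$ klt with $-(K_X+\Delta_X)\sim_\bQ r\sL$ for some $r\in\bQ_{>0}$, and then define $\Delta$ to be the closure in $Z$ of the cone over $\Delta_X$. Since $X$ is smooth Fano and $\sL$ is ample, $-K_X-r\sL$ is ample for every sufficiently small $r\in\bQ_{>0}$; fixing such an $r$ and a sufficiently divisible $N\gg 0$, one picks any effective $D\in|N(-K_X-r\sL)|$ and sets $\Delta_X:=\tfrac{1}{N}D$. The relation $-(K_X+\Delta_X)\sim_\bQ r\sL$ is then built in, and $(X,\Delta_X)$ is klt because the coefficients $1/N$ are arbitrarily small on the smooth variety $X$. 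Then \cite[3.1]{SingBook} gives that $K_Z+\Delta$ is $\bQ$-Cartier, while \cite[3.11]{SingBook} combined with the positivity of the slope $r$ yields that $(Z,\Delta)$ is klt.

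For (iii), the hypothesis $\omega_X\simeq\sL^q$ with $q\in\bZ$ together with the ampleness of $-K_X$ forces $q\leq -1$, so $-K_X\sim a\sL$ with integer $a=-q\geq 1$. Applying the same machinery with $\Delta_X=0$ (which is canonical because $X$ is smooth) and integer slope $a\geq 1$ would yield that $Z$ has canonical singularities---the sharp threshold $a\geq 1$ being precisely what upgrades klt to canonical in the cone correspondence.

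The only conceivable obstacle is characteristic-dependence, but \cite[3.1, 3.11]{SingBook} are characteristic-free (they rely on the $\bG_m$-bundle structure on $Z\setminus\{v\}$), and the construction of $\Delta_X$ sidesteps any Bertini-type worry: one only needs $D$ to exist as an effective divisor, not to be smooth, and kltness on smooth $X$ with small-coefficient $\Delta_X$ is automatic. Hence the proof should essentially reduce to quoting the right statements.
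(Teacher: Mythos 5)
Your overall strategy for (i) --- build an effective $\bQ$-divisor $\Delta_X$ on $X$ with $(X,\Delta_X)$ klt and $-(K_X+\Delta_X)\sim_{\bQ}r\sL$ for some $r>0$, take $\Delta$ to be the cone over it, and quote \cite[3.1, 3.11]{SingBook} --- is exactly the paper's, and parts (ii) and (iii) are essentially fine. (For (iii) the paper argues slightly differently: $\omega_X\simeq\sL^q$ makes $\omega_Z$ a line bundle, and klt together with Cartier $K_Z$ forces the integral discrepancies to be $\geq 0$; your route via the threshold $r\geq 1$ in the cone correspondence also works.)

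The gap is the sentence ``one picks \emph{any} effective $D\in|N(-K_X-r\sL)|$ \dots\ and $(X,\Delta_X)$ is klt because the coefficients $1/N$ are arbitrarily small.'' This is not true. The coefficients of $\Delta_X=\frac{1}{N}D$ are $\frac{1}{N}$ times the multiplicities of the components of $D$, and an arbitrary member of the linear system may be non-reduced: taking $D=ND_0$ for a prime divisor $D_0$ gives $\Delta_X=D_0$ with coefficient $1$, which is not klt. Even restricting to reduced $D$, what you need is $\operatorname{lct}(X,D)>1/N$, and since $D$ moves in $|N(-K_X-r\sL)|$ its singularities are allowed to worsen exactly as fast as the coefficient $1/N$ shrinks, so smallness of the coefficient alone proves nothing. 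In other words, you cannot ``sidestep the Bertini-type worry'': you must choose $D$ to be a \emph{general} member of the (very ample, for $N\gg0$) linear system, so that it is smooth and $(X,\tfrac{1}{N}D)$ is klt. That is precisely the paper's move --- it takes a general, hence smooth, member of $|\sN^m|$ for $\sN=\sL^{-1}\otimes\omega_X^{-r}$ ample and $m\gg0$, and sets $\Delta$ to be $\tfrac{1}{rm}$ times the cone over it. Classical Bertini for general members of a very ample complete linear system on a smooth variety is available in characteristic $p$, so this is not an obstacle, but it is a step you must actually take; with that correction your argument coincides with the paper's.
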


\begin{proof}
  Since $\omega_X^{-1}$ is ample, there is an $r\in\bN$, $r>0$, such that
  $\sN=\sL^{-1}\otimes \omega_X^{-r}$ is also ample.  Let $N$ be a general member of
  the complete linear system corresponding to $\sN^{m}$ for some $m\gg 0$,
  $\what N\subseteq Z$ the cone over $N$, and $\Delta:=\frac 1{rm} \what N$.  Then
  $\sO_X(rm(K_X+\frac 1{rm}N))\simeq \sL^{-m}$, so $K_Z+\Delta$ is a Cartier divisor
  on $Z$ (cf.\cite[3.14(4)]{SingBook}), and hence $K_Z+\Delta$ is
  $\bQ$-Cartier. Furthermore, $N$ is smooth and hence $(X,\frac 1{rm} N)$ is klt, so
  \autoref{item:2} follows from \cite[3.1(3)]{SingBook}.
  Now, if $\omega_X\simeq \sL^q$ for some $q\in\bZ$, then $\omega_Z$ is a line bundle
  and hence \autoref{item:3} follows from \autoref{item:2}. 
  Finally, \autoref{item:1} is simply a restatement of \eqref{say:not-CM}. 
\end{proof}

\section{The construction of Lauritzen and Rao}\label{sec:constr-laur-rao}

\noindent
Next, I will recall the construction of Lauritzen and Rao from \cite{LR97}.

Let $V$ be a vector space of dimension $n+1$ over a field $k$ of characteristic $p$
where $p\geq n-1\geq 2$, and let $\bP(V)\simeq \bP^n$ be the associated projective
space of dimension $n$. 
Let $W:=\bP(V)\times \bP(V^\vee)$ and for $a,b\in\bZ$ let $\sO_W(a,b)$ denote the
line bundle $\sO_{\bP(V)}(a)\boxtimes \sO_{\bP(V^\vee)}(b)$ on $W$.
Next let $\sA$ be the locally free sheaf defined by the short exact sequence
\begin{equation}
  \label{eq:9}
  \xymatrix{%
    0\ar[r] & \sA \ar[r] & V\otimes \sO_{\bP(V)} \ar[r] & \sO_{\bP(V)}(1) \ar[r] & 0, 
  }
\end{equation}
and let $\alpha: Y:=\bP(\sA^\vee)\to \bP(V)$ be the projective space bundle over
$\bP(V)$ associated to $\sA^\vee$. Let $\sO_{\alpha}(1)$ denote the corresponding
tautological line bundle on $Y$. Then there exists another associated short exact
sequence on $Y$:
\begin{equation}
  \label{eq:3}
  \xymatrix{%
    0\ar[r] & \sG \ar[r] & \alpha^*\sA^\vee \ar[r] & \sO_{\alpha}(1) \ar[r] & 0,  }
\end{equation}
which defines the locally free sheaf $\sG$ on $Y$. It is shown in \cite[p.23]{LR97}
that $Y$ admits a closed embedding into $W\simeq \bP^n\times {\bP}^n$ with
bihomogenous coordinate ring
\begin{equation}
  \label{eq:10}
  \factor{k[x_0,\dots,x_n;y_0,\dots,y_n]}{(\sum x_iy_i )}.
\end{equation}
In particular, the ideal sheaf of $Y$ in $W$ is $\sO_W(-1,-1)$.  Let
$\sO_Y(a,b):=\sO_W(a,b)\resto Y$.  Then it follows easily that
\begin{equation}
  \label{eq:8}
  \omega_Y\simeq \sO_Y(-n,-n),\quad \alpha^*\sO_{\bP(V)}(a)\simeq \sO_Y(a,0),
  \quad\text{ and }\quad \sO_\alpha(b) \simeq \sO_Y(0,b).
\end{equation}
Let $\eta$ be defined as the composition of the natural morphisms induced by the
morphisms in \autoref{eq:9} and \autoref{eq:3} using the isomorphisms in
\autoref{eq:8}:
\begin{equation}
  \label{eq:11}
  \xymatrix{%
    V^\vee\otimes\sO_Y \ar[r]\ar@/^1.5em/[rr]^\eta & \alpha^*\sA^\vee \ar[r] & \sO_Y(0,1)
  }
\end{equation}
Then we have the following commutative diagram, where $\sB=\ker\eta$:
\begin{equation}
  \label{eq:6}
  \begin{aligned}
    \xymatrix
    {%
      & 0 \ar[d] & 0\ar[d] \\
      & \sO_Y(-1,0) \ar[d] \ar[r]^=     & \sO_Y(-1,0) \ar[d] \\
      0 \ar[r] & \sB \ar[d]_\tau\ar[r] & V^\vee\otimes\sO_Y \ar[d]\ar[r]^\eta &
      \sO_Y(0,1) \ar[d]^=
      \ar[r] & 0 \\
      0 \ar[r] & \sG \ar[d]\ar[r] & \alpha^*\sA^\vee \ar[d]\ar[r] & \sO_Y(0,1)
      \ar[r] & 0 \\
      & 0 & 0 }
  \end{aligned}
\end{equation}

\noindent
Finally, let $\sG'=\sG\otimes \sO_\alpha(1)$ and $\pi: X:=\bP(F^*\sG')\to Y$, where
$F:Y\to Y$ is the absolute Frobenius morphism of $Y$. Note that by construction $\dim
X=3n-3$ and $\dim Y=2n-1$.

\begin{subsay}\label{say:very}
  Again, it is shown in \cite[p.23]{LR97} that the tautological line bundle of $\pi$,
  denoted by $\sO_\pi(1)$, is globally generated and the line bundle
  $\pi^*\sO_Y(1,1)\otimes\sO_\pi(1)$ is very ample. It follows that
  $\pi^*\sO_Y(1,1)\otimes\sO_\pi(q)$ is also very ample for any $q>0$.
\end{subsay}

Using the formula for the canonical bundle of a projective space bundle, one obtains
that 
\begin{equation}
  \label{eq:1}
  \omega_X\simeq \pi^*\sO_Y(p-n, p(n-2)-n)\otimes\sO_\pi(-n+1)
\end{equation}

As it was pointed out by H\'el\`ene Esnault if one chooses the values $p=2$ and
$n=3$, then $X$ is a Fano variety and hence there exists a klt pair $(Z,\Delta)$
where $Z$ is not \CM, in particular, it does not have rational singularities
cf.~\autoref{prop:klt}.

\section{A Fano variety 
  violating Kodaira vanishing}

\noindent
We will use the above construction and prove that if $p=2$ and $n=3$, then the very
ample line bundle $\omega_X^{-2}$ violates Kodaira vanishing. To do this, first we
need to compute a few auxiliary cohomology groups.
\noindent
We will keep using the notation introduced in \autoref{sec:constr-laur-rao}.

\setcounter{proclaim}{\value{equation}}
\numberwithin{equation}{proclaim}
\begin{lem}\label{lem:a-b}
  Let $a,b\in\bZ$. 
  Then 
  \[
  H^i(Y,\sO_Y(a,b))=0
  \]
  if either 
  \begin{enumerate}
  \item\label{item:6} $a$ and $b$ are arbitrary and $0<i<n-1$, or
  \item\label{item:7} $a,b>-n$ and $i>0$, or
  \item\label{item:8} at least one of $a$ and $b$ is negative and $i=0$.
  \end{enumerate}
\end{lem}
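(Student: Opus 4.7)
My plan is to twist the defining short exact sequence of $Y$ in $W = \bP(V) \times \bP(V^\vee)$,
\[
0 \to \sO_W(-1,-1) \to \sO_W \to \sO_Y \to 0,
\]
by $\sO_W(a,b)$ and read off the vanishing from the resulting long exact sequence together with the K\"unneth formula. Concretely, since
\[
H^i\bigl(W, \sO_W(a,b)\bigr) = \bigoplus_{p+q=i} H^p\bigl(\bP^n, \sO(a)\bigr) \otimes H^q\bigl(\bP^n, \sO(b)\bigr),
\]
and since $H^p(\bP^n, \sO(m))$ is nonzero only for $p \in \{0,n\}$, the cohomology of any line bundle $\sO_W(a,b)$ is concentrated in degrees $\{0, n, 2n\}$; moreover $H^0$ requires both entries to be $\geq 0$ and $H^n$ appearing at the $(0,n)$ or $(n,0)$ spot requires one entry $\geq 0$ and the other $\leq -n-1$.

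With this in hand, each case is immediate. For \autoref{item:6}, when $0 < i < n-1$ both $i$ and $i+1$ lie strictly between $0$ and $n$ (recall $n \geq 3$ under the running hypothesis $p \geq n-1 \geq 2$), so both $H^i(W, \sO_W(a,b))$ and $H^{i+1}(W, \sO_W(a-1,b-1))$ vanish for every $a, b \in \bZ$, and the long exact sequence forces $H^i(Y, \sO_Y(a,b)) = 0$. For \autoref{item:7}, the hypothesis $a, b > -n$ gives $a-1, b-1 \geq -n > -n-1$, so neither $\sO_W(a,b)$ nor $\sO_W(a-1,b-1)$ has $H^n$; their only possibly nonzero cohomology is $H^0$. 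Thus for any $i > 0$ the long exact sequence again yields $H^i(Y, \sO_Y(a,b)) = 0$.

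For \autoref{item:8}, without loss of generality $a < 0$. Then $H^0(W, \sO_W(a,b)) = 0$ and $H^0(W, \sO_W(a-1,b-1)) = 0$ since the first factor already kills $H^0$ on $\bP^n$. Moreover $H^1(W, \sO_W(a-1,b-1)) = 0$ because $1 \notin \{0, n, 2n\}$ (again using $n \geq 3$). The relevant piece of the long exact sequence is
\[
0 \to H^0\bigl(Y, \sO_Y(a,b)\bigr) \to H^1\bigl(W, \sO_W(a-1,b-1)\bigr) = 0,
\]
giving the stated vanishing.

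There is no real obstacle here: the only thing to keep an eye on is the range of $n$, where we use $n \geq 3$ so that degrees $1, \dots, n-1$ on $W$ are forced to vanish; this is built into the Lauritzen--Rao setup. The lemma is really just bookkeeping with K\"unneth applied to the hypersurface sequence for $Y \subset \bP^n \times \bP^n$.
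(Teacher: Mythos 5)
Your proof is correct and follows exactly the paper's argument: twist the hypersurface sequence for $Y\subset W$ by $\sO_W(a,b)$ and apply the K\"unneth formula on $\bP^n\times\bP^n$, noting that the cohomology of $\sO_W(a,b)$ is concentrated in degrees $0$, $n$, $2n$. You have merely spelled out the case analysis in more detail than the paper does.
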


\begin{proof}
  By (\ref{eq:10}) we have the following short exact sequence:
  \[
  \xymatrix{%
    0\ar[r] & \sO_W(a-1,b-1) \ar[r] & \sO_W(a,b) \ar[r] & \sO_Y(a,b) \ar[r] & 0
  }
  \]
  Since $W\simeq \bP^n\times \bP^n$, using the K\"unneth formula, the first two
  (non-zero) sheaves above have no cohomology in the following cases:
  \begin{enumalpha}
  \item\label{item:9} for $0<i<n$ and arbitrary $a$ and $b$,
  \item\label{item:10} for $a,b>-n$ and $i>0$, and 
  \item\label{item:11} at least one of $a$ and $b$ is negative and $i=0$.
  \end{enumalpha}
  Then \ref{item:9} implies \autoref{item:6}, \ref{item:10} implies
  \autoref{item:7}, and \ref{item:9} and \ref{item:11} together imply
  \autoref{item:8}.
\end{proof}

\begin{cor}\label{cor:a-b}
  Under the same conditions as in \autoref{lem:a-b},
  \[
  H^i(Y,\sO_Y(a,b)\otimes F^*(V^\vee\otimes\sO_Y) )=0.
  \]
\end{cor}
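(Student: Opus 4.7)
The plan is to reduce the corollary to \autoref{lem:a-b} by recognizing that $F^*(V^\vee\otimes\sO_Y)$ is simply a trivial bundle. The key observation is that $V^\vee$ is a $k$-vector space of dimension $n+1$, so $V^\vee\otimes \sO_Y \simeq \sO_Y^{\oplus(n+1)}$ as an $\sO_Y$-module.

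The first step is to note that the absolute Frobenius pullback preserves the free sheaf $\sO_Y$, i.e., $F^*\sO_Y\simeq \sO_Y$, and commutes with finite direct sums. Hence
\[
F^*(V^\vee\otimes\sO_Y)\simeq F^*\bigl(\sO_Y^{\oplus(n+1)}\bigr)\simeq \sO_Y^{\oplus(n+1)}.
\]
The second step is to tensor with $\sO_Y(a,b)$ and use the projection formula (here just distributivity of $\otimes$ over $\oplus$) to obtain
\[
\sO_Y(a,b)\otimes F^*(V^\vee\otimes\sO_Y)\simeq \sO_Y(a,b)^{\oplus(n+1)}.
\]
The final step is to apply \autoref{lem:a-b} to each of the $n+1$ summands under the relevant hypothesis among \autoref{item:6}, \autoref{item:7}, \autoref{item:8}, which yields the desired vanishing.

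There is no genuine obstacle here; the only subtlety worth flagging is the Frobenius behavior of trivial bundles. The slight concern would be if $V^\vee$ were meant to carry a nontrivial Frobenius structure, but in our setup $V^\vee$ is just a $k$-vector space tensored into $\sO_Y$, so $V^\vee$ sits on the ``non-Frobenius'' side of the tensor and the pullback leaves the rank-$(n+1)$ trivial bundle unchanged.
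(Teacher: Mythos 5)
Your proof is correct and is exactly the paper's argument, just written out in more detail: the paper's proof is the one-line observation that $F^*(V^\vee\otimes\sO_Y)$ is a free $\sO_Y$-sheaf, so the vanishing follows summand by summand from \autoref{lem:a-b}. Your explicit identification $F^*(V^\vee\otimes\sO_Y)\simeq\sO_Y^{\oplus(n+1)}$ and the remark about which side of the tensor $V^\vee$ sits on are exactly the right justification for that one line.
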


\begin{proof}
  $F^*(V^\vee\otimes\sO_Y)$ is a free $\sO_Y$ sheaf, so this is straightforward from
  \autoref{lem:a-b}. 
\end{proof}

\begin{lem}\label{lem:B-coker}
  Let $a,b\in\bZ$. 
  Then
  \begin{multline*}
    H^1(Y,\sO_Y(a,b)\otimes F^*\sB)\simeq \\ \simeq \xymatrix@R=3em{%
      \coker\left[ H^0(Y,\sO_Y(a,b)\otimes F^*(V^\vee\otimes\sO_Y) )
        \ar[rr]^-{\eta_1:=F^*\eta} \right.  & & \left. H^0(Y,\sO_Y(a,b+p)) \right] }
  \end{multline*}
  where $\eta_1=F^*\eta$ is induced by the morphism $\eta$ defined in (\ref{eq:11}).
  In particular, if either $a<0$ or $b<-p$, then 
  \[
    H^1(Y,\sO_Y(a,b)\otimes F^*\sB)=0.
  \]
\end{lem}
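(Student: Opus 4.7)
The plan is to apply the absolute Frobenius pullback $F^*$ to the defining short exact sequence of $\sB$ from diagram~\eqref{eq:6}, namely
\[
0 \to \sB \to V^\vee \otimes \sO_Y \xrightarrow{\eta} \sO_Y(0,1) \to 0,
\]
then tensor with $\sO_Y(a,b)$, and extract the asserted formula from the resulting long exact sequence in cohomology. Since $F$ is the absolute Frobenius and $\sO_Y(0,1)$ is a line bundle, $F^*\sO_Y(0,1)\simeq \sO_Y(0,1)^{\otimes p} = \sO_Y(0,p)$; exactness of $F^*$ on locally free sheaves, together with tensoring by the locally free $\sO_Y(a,b)$, therefore yields the short exact sequence
\[
0 \to \sO_Y(a,b) \otimes F^*\sB \to \sO_Y(a,b) \otimes F^*(V^\vee \otimes \sO_Y) \xrightarrow{\eta_1} \sO_Y(a,b+p) \to 0,
\]
where the right-hand map is $\eta_1 := \mathrm{id}\otimes F^*\eta$ by definition.

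Passing to the associated long exact sequence in cohomology produces the four-term piece
\begin{multline*}
H^0(Y,\sO_Y(a,b) \otimes F^*(V^\vee \otimes \sO_Y)) \xrightarrow{\eta_1} H^0(Y,\sO_Y(a,b+p)) \to \\
\to H^1(Y,\sO_Y(a,b) \otimes F^*\sB) \to H^1(Y,\sO_Y(a,b) \otimes F^*(V^\vee \otimes \sO_Y)).
\end{multline*}
Since $n \geq 3$ in the construction, one has $0<1<n-1$, so \autoref{lem:a-b}\,(i) gives $H^1(Y,\sO_Y(a,b))=0$ for arbitrary $a,b$, and \autoref{cor:a-b} then forces the rightmost term to vanish. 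This identifies $H^1(Y,\sO_Y(a,b)\otimes F^*\sB)$ with $\coker \eta_1$, which is the claimed formula.

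For the ``in particular'' statement, if $a<0$ or $b<-p$, so that $b+p<0$, then \autoref{lem:a-b}\,(iii) gives $H^0(Y,\sO_Y(a,b+p))=0$, whence $\coker \eta_1 = 0$ and the $H^1$ in question vanishes. There is no genuine obstacle here: the argument is a direct long-exact-sequence computation, and the only non-routine input is the Frobenius-induced shift $\sO_Y(0,1)\mapsto \sO_Y(0,p)$, which is exactly the mechanism by which the later Kodaira-violation computation will detect a non-trivial class.
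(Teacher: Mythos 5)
Your proposal is correct and follows exactly the paper's route: pull back the middle row of \eqref{eq:6} by Frobenius, twist by $\sO_Y(a,b)$, and read off the cokernel description from the long exact sequence using \autoref{cor:a-b} (the paper states this in one line). One tiny wording slip: in the ``in particular'' step the clause ``so that $b+p<0$'' only applies to the case $b<-p$; when $a<0$ the vanishing $H^0(Y,\sO_Y(a,b+p))=0$ comes from \autoref{lem:a-b}\,(iii) because $a$ itself is negative, not because $b+p<0$.
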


\begin{proof}
  Consider the Frobenius pull-back of the middle row of the diagram in (\ref{eq:6})
  twisted with $\sO_Y(a,b)$:
  \begin{equation*}
      \xymatrix@C2em{%
        0 \ar[r] & \sO_Y(a,b)\otimes F^*\sB \ar[r] & \sO_Y(a,b)\otimes
        F^*(V^\vee\otimes\sO_Y) \ar[r] & \sO_Y(a, b+p) \ar[r] & 0.}
  \end{equation*}
  Then, since $n>2$, both statements follow from \autoref{cor:a-b}.
\end{proof}

\begin{lem}\label{lem:B-G}
  Let $a,b\in\bZ$. 
  Then the morphism induced by $\tau$ in (\ref{eq:6}) is an isomorphism:
  \begin{equation*}
    \xymatrix@R=.1em{%
      H^1(\sO_Y(a,b)\otimes F^*\sB)
      \ar[r]^\simeq & H^1(\sO_Y(a,b)\otimes F^*\sG). }    
  \end{equation*}
  Furthermore, if $a<p$ or $b<-p$, then the natural morphism induced by the same
  morphism as above is an injection:
  \begin{equation*}
    \xymatrix@R=.1em{%
      H^1(\sO_Y(a,b)\otimes \sym^2F^*\sB)
      \ar@{^(->}[r]  & H^1(\sO_Y(a,b)\otimes \sym^2F^*\sG).
    }
  \end{equation*}
\end{lem}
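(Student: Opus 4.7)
I would apply the Frobenius pullback to the left column of the diagram in \autoref{eq:6}, which yields the short exact sequence
\[
0\to \sO_Y(-p,0)\to F^*\sB\xrightarrow{F^*\tau} F^*\sG\to 0.
\]
Twisting by $\sO_Y(a,b)$ and taking cohomology, the induced map on $H^1$ appears sandwiched in the long exact sequence between $H^1(\sO_Y(a-p,b))$ and $H^2(\sO_Y(a-p,b))$. The first assertion therefore reduces to showing that both of these vanish. Since $n\geq 3$, the vanishing of $H^1$ for arbitrary $(a-p,b)$ is immediate from \autoref{lem:a-b}\autoref{item:6}. For the $H^2$-vanishing, I would make a case analysis on the signs/sizes of $a-p$ and $b$, using \autoref{lem:a-b}\autoref{item:7} in the positive range and Serre duality on $Y$ (with $\omega_Y\simeq \sO_Y(-n,-n)$ and $\dim Y=2n-1$) combined with parts \autoref{item:6}/\autoref{item:7} in the dual range; any leftover cases can be handled directly from the K\"unneth formula on $W\simeq \bP^n\times\bP^n$ together with the defining sequence $0\to \sO_W(a-1,b-1)\to \sO_W(a,b)\to \sO_Y(a,b)\to 0$.

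\textbf{The $\sym^2$ part.} I would use the standard two-step filtration induced on the second symmetric power by a short exact sequence of locally free sheaves. Applied to the displayed sequence above it produces
\[
0\to G_1\to \sym^2 F^*\sB\to \sym^2 F^*\sG\to 0,\qquad
0\to \sO_Y(-2p,0)\to G_1\to \sO_Y(-p,0)\otimes F^*\sG\to 0.
\]
After twisting by $\sO_Y(a,b)$, the desired injection on $H^1$ follows as soon as $H^1(\sO_Y(a,b)\otimes G_1)=0$. The long exact sequence attached to the second filtration step reduces this to
\[
H^1(\sO_Y(a-2p,b))=0 \qquad\text{and}\qquad H^1(\sO_Y(a-p,b)\otimes F^*\sG)=0.
\]
The first is \autoref{lem:a-b}\autoref{item:6}. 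For the second, I would invoke the first part of the present lemma (just established) to identify $H^1(\sO_Y(a-p,b)\otimes F^*\sG)$ with $H^1(\sO_Y(a-p,b)\otimes F^*\sB)$, and then apply \autoref{lem:B-coker}, which kills the latter under either $a-p<0$ (i.e.\ $a<p$) or $b<-p$. This matches exactly the hypothesis of the second statement.

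\textbf{Anticipated obstacle.} The most delicate step is the unconditional $H^2$-vanishing required for the first statement: \autoref{lem:a-b}\autoref{item:6} by itself only covers cohomology strictly below degree $n-1$, so one has to carefully stitch together parts \autoref{item:7} and Serre duality to exhaust all of $\bZ^2$. If the naive case analysis proves too unwieldy, I would instead pass to the snake-style comparison of the middle and bottom rows of \autoref{eq:6}, computing $H^1(F^*\sB\otimes\sO_Y(a,b))$ and $H^1(F^*\sG\otimes\sO_Y(a,b))$ as cokernels of the Frobenius-twisted evaluation maps into $H^0(\sO_Y(a,b+p))$ (as in \autoref{lem:B-coker}) and identifying them via the surjection $H^0(V^\vee\otimes\sO_Y(a,b))\twoheadrightarrow H^0(\alpha^*F^*\sA^\vee\otimes\sO_Y(a,b))$ that comes for free from $H^1(\sO_Y(a-p,b))=0$. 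This route avoids $H^2$ altogether.
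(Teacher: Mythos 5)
Your outline reproduces the paper's argument step for step: the first assertion from the Frobenius pull-back of the left column of (\ref{eq:6}), and the second from the two-step filtration of $\sym^2F^*\sB$ with graded pieces $\sO_Y(-2p,0)$, $\sO_Y(-p,0)\otimes F^*\sG$, $\sym^2F^*\sG$, reducing everything to the vanishing of $H^1$ of the middle term of the filtration (your $G_1$ is the paper's $\sE$), which is then killed by combining the first assertion with \autoref{lem:B-coker} and \autoref{lem:a-b}. So structurally there is nothing to object to.

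The place where you and the paper part ways is the term $H^2(Y,\sO_Y(a-p,b))$, which the paper dismisses with the single clause ``since $n>2$, this, combined with \autoref{lem:a-b}, implies the first statement,'' and which you rightly single out as the delicate point. However, the repair you propose cannot succeed, because the unconditional vanishing you are after is false for $n=3$. From $0\to\sO_W(c-1,d-1)\to\sO_W(c,d)\to\sO_Y(c,d)\to 0$ and K\"unneth one gets $H^2(W,\sO_W(c,d))=0$ for every $(c,d)$ and hence $H^2(Y,\sO_Y(c,d))\simeq\ker\bigl(H^3(W,\sO_W(c-1,d-1))\to H^3(W,\sO_W(c,d))\bigr)$; taking for instance $(c,d)=(-3,5)$ with $n=3$ gives $H^2(Y,\sO_Y(-3,5))\simeq H^3(\bP^3,\sO(-4))\otimes H^0(\bP^3,\sO(4))\neq 0$. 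So no amount of stitching together \autoref{item:7}, Serre duality and K\"unneth will exhaust $\bZ^2$: what the long exact sequence honestly yields is that $H^1(\sO_Y(a,b)\otimes F^*\sB)\to H^1(\sO_Y(a,b)\otimes F^*\sG)$ is always injective (since $H^1(\sO_Y(a-p,b))=0$ by \autoref{item:6}), and is an isomorphism under the \emph{additional} hypothesis $H^2(Y,\sO_Y(a-p,b))=0$ --- guaranteed, e.g., when $a-p>-n$ and $b>-n$ by \autoref{item:7}, or unconditionally when $n\geq 4$ by \autoref{item:6}. Your fallback route hits the same wall: to realize $H^1(\sO_Y(a,b)\otimes F^*\sG)$ as the cokernel of $H^0(\alpha^*F^*\sA^\vee\otimes\sO_Y(a,b))\to H^0(\sO_Y(a,b+p))$ you need $H^1(\alpha^*F^*\sA^\vee\otimes\sO_Y(a,b))=0$, and that group is again computed by $H^2(Y,\sO_Y(a-p,b))$ via the middle column of (\ref{eq:6}); the surjectivity on $H^0$ that you extract from $H^1(\sO_Y(a-p,b))=0$ only re-proves injectivity. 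The same caveat then propagates into your (and the paper's) proof of the second assertion, where the first assertion is invoked at the twist $(a-p,b)$ and therefore needs $H^2(Y,\sO_Y(a-2p,b))=0$. Rather than searching for a cleverer vanishing argument, you should record the extra hypothesis under which the isomorphism actually follows from this method and verify it explicitly at the parameter values where the lemma is applied downstream (in \autoref{lem:B-G} itself and in \autoref{thm:Kod-fails}); as stated, for all $a,b\in\bZ$ with $n=3$, the first assertion is not something this argument can deliver.
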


\begin{proof}
  Consider the Frobenius pull-back of the first vertical short exact sequence from
  (\ref{eq:6}):
  \[
  \xymatrix{%
    0\ar[r] & \sO_Y(-p,0)\ar[r] & F^*\sB \ar[r] & F^*\sG \ar[r] & 0
  }
  \]
  Since $n>2$, this, combined with \autoref{lem:a-b}, implies the first statement.

  Next, observe that this short exact sequence also implies that there exists a
  filtration
  \[
  \sym^2F^*\sB \supseteq \sE \supseteq \sO_Y(-2p,0)
  \]
  such that (after twisting by $\sO_Y(a,b)$) we have the short exact sequences
  \begin{equation}
    \label{eq:4}
    \xymatrix@C=1.5em{%
      0\ar[r] & \sO_Y(a-2p,b) \ar[r] & \sO_Y(a,b)\otimes\sE \ar[r] &
      \sO_Y(a-p,b)\otimes F^*\sG \ar[r] & 
      0  } 
  \end{equation}
  and
  \begin{equation}
    \label{eq:5}
    \xymatrix@C=1.5em{%
      0\ar[r] &  \sO_Y(a,b)\otimes\sE \ar[r] &
      \sO_Y(a,b)\otimes   \sym^2F^*\sB \ar[r]  &
      \sO_Y(a,b)\otimes\sym^2F^*\sG \ar[r] &  
      0  }
  \end{equation}
  Then by the first statement, \autoref{lem:B-coker}, \autoref{lem:a-b}, and
  (\ref{eq:4}) it follows that if $a<p$ or $b<-p$, then
  \[
  H^1(Y, \sO_Y(a,b)\otimes\sE )=0.
  \]
  By (\ref{eq:5}) this implies the second statement.
\end{proof}

\begin{lem}\label{lem:HFB-not-0}
  Let $a,b\in\bZ$ such that $a\geq 0$ and $b>-n$.  Then
  \[
  H^1(Y,\sO_Y(a,b)\otimes\sym^2F^*\sB)\neq 0 
  \]
\end{lem}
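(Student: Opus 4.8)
The plan is to mimic the proof of \autoref{lem:B-coker} one symmetric power higher: present $\sym^2F^*\sB$ as the kernel of a map out of a free sheaf, and then read off the desired group as the cokernel of an explicit map on global sections.

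First I would apply the (flat) Frobenius $F$ to the middle row of \eqref{eq:6}, obtaining the short exact sequence of locally free sheaves
\begin{equation*}
  0\to F^*\sB\to E_0\xrightarrow{\,F^*\eta\,}\sO_Y(0,p)\to 0,\qquad E_0:=V^\vee\otimes\sO_Y,
\end{equation*}
which is exactly the sequence underlying \autoref{lem:B-coker}. Taking second symmetric powers and using the standard filtration of $\sym^2E_0$ by the subbundle $F^*\sB$ presents $\sym^2F^*\sB$ as the kernel of a surjection $\sym^2E_0\onto\cQ$, where $\sym^2E_0=\sym^2V^\vee\otimes\sO_Y$ is free and $\cQ$ carries the two-step filtration
\begin{equation*}
  0\to\sO_Y(0,p)\otimes F^*\sB\to\cQ\to\sO_Y(0,2p)\to 0 .
\end{equation*}

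Next I would twist by $\sO_Y(a,b)$ and pass to the long exact sequence of $0\to\sym^2F^*\sB\to\sym^2E_0\to\cQ\to0$. Since $a\geq 0>-n$ and $b>-n$, \autoref{lem:a-b} gives $H^{>0}(Y,\sO_Y(a,b))=0$, and hence $H^1(Y,\sym^2E_0\otimes\sO_Y(a,b))=0$; therefore
\begin{equation*}
  H^1\!\bigl(Y,\sO_Y(a,b)\otimes\sym^2F^*\sB\bigr)\simeq\coker\Bigl[\sym^2V^\vee\otimes H^0(\sO_Y(a,b))\xrightarrow{\,\rho\,}H^0\!\bigl(\cQ\otimes\sO_Y(a,b)\bigr)\Bigr].
\end{equation*}
So everything reduces to proving that $\rho$ is not surjective. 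The composite of $\rho$ with the projection $\cQ\to\sO_Y(0,2p)$ is the multiplication map $\mu$ induced by $\sym^2F^*\eta$, and the essential feature is that $F^*\eta$ is given by the $p$-th powers of the linear forms defining $\eta$. Consequently $\im\mu$ is confined to the span of products of these $p$-th powers inside $H^0(\sO_Y(a,b+2p))$, which is a Frobenius-twisted, and in general proper, subspace.

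The main obstacle is to convert this structural constraint into an actual class in $\coker\rho$, and this is where the characteristic-$p$ hypothesis really enters. Concretely I would work in the bihomogeneous coordinate ring \eqref{eq:10}, where $\eta$ is given by the coordinate forms, and produce an explicit section of $\cQ\otimes\sO_Y(a,b)$ not in $\im\rho$ — either a degree-$(a,b+2p)$ section lying outside $\im\mu$, or, more robustly, a section of the subsheaf $\sO_Y(a,b+p)\otimes F^*\sB\subseteq\cQ$ not reached by $\rho$. The delicate point is that the forms $\ell_i$ are not independent on $Y$, since they satisfy $\sum x_iy_i=0$, and that raising them to $p$-th powers introduces further collapses, so one must determine precisely which sections of $\sO_Y(a,b+2p)$ lie in $\im\mu$; the cokernel already computed in \autoref{lem:B-coker} is the natural device for organizing this bookkeeping. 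The hypotheses $a\geq0$ and $b>-n$ are exactly what guarantee both the cohomology vanishing used above and the existence of the surviving section.
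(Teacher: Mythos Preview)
Your setup is the same as the paper's, only repackaged: the paper uses the filtration $\sym^2F^*\sB\subset\sF\subset\sym^2E_0$ directly, whereas you pass to the quotient $\cQ=\sym^2E_0/\sym^2F^*\sB$ with its induced two–step filtration. Either way one lands on the same cokernel problem, and you correctly reduce to showing that $\rho$ is not surjective.

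The gap is in the final step. Knowing that $\im\mu$ is a proper subspace of $H^0(\sO_Y(a,b+2p))$ is \emph{not} enough: a degree-$(a,b+2p)$ section outside $\im\mu$ gives a class in $\coker\rho$ only if it lifts to $H^0(\cQ\otimes\sO_Y(a,b))$, and the obstruction to lifting lives in $H^1(\sO_Y(a,b+p)\otimes F^*\sB)$, which is nonzero in general. So your ``first approach'' as stated does not close the argument, and your ``second approach'' is left entirely unexecuted. What is missing is the identification of the image of $H^0(\cQ\otimes\sO_Y(a,b))\to H^0(\sO_Y(a,b+2p))$.

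The paper pins this down precisely. Writing $\eta_1$ for the map on $H^0$ induced by $F^*\eta$ (multiplication by the row $[y_i^p]$) and $\eta_2=\mu$ for the map induced by $\sym^2F^*\eta$ (multiplication by $[y_i^py_j^p]$), both with target $H^0(\sO_Y(a,b+2p))$, one has $\im\eta_2\subseteq\im\eta_1$, and the long exact sequence of your filtration on $\cQ$ (equivalently, of the paper's sequence $0\to\sym^2F^*\sB\to\sF\to\sO_Y(0,p)\otimes F^*\sB\to 0$) yields a surjection
\[
H^1\bigl(Y,\sO_Y(a,b)\otimes\sym^2F^*\sB\bigr)\twoheadrightarrow \im\eta_1/\im\eta_2.
\]
The hypotheses $a\ge 0$, $b>-n\ge -p$ guarantee $H^0(\sO_Y(a,b+p))\neq 0$, hence $\eta_1$ has strictly larger image than $\eta_2$ (there are monomials in the $y_j$ of degree $b+p\ge 1$ that are not in the ideal generated by the $y_j^p$). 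That strict inclusion is the missing ingredient; once you insert it, your outline becomes the paper's proof.
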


\begin{proof}
  Observe that the middle horizontal short exact sequence in the diagram (\ref{eq:6})
  implies that there exists a filtration
  \[
  \sym^2\left(F^*(V^\vee\otimes\sO_{Y})\right) \supseteq \sF \supseteq \sym^2F^*\sB
  \]
  such that (after twisting by $\sO_Y(a,b)$) we have the short exact sequences
  \begin{equation}
    \label{eq:13}
    \xymatrix@C=1.5em{%
      0\ar[r] & \sO_Y(a,b)\otimes\sym^2F^*\sB \ar[r] & \sO_Y(a,b)\otimes\sF \ar[r] &
      \sO_Y(a,b+p)\otimes F^*\sB \ar[r] & 
      0  } 
  \end{equation}
  and
  \begin{equation}
    \label{eq:12}
    \xymatrix@C=1.25em{%
      0\ar[r] &  \sO_Y(a,b)\otimes\sF \ar[r] &
      \sO_Y(a,b)\otimes   \sym^2\left(F^*(V^\vee\otimes\sO_{Y})\right) \ar[r]  &
      \sO_Y(a,b+2p) \ar[r] &  
      0.  } 
  \end{equation}
  Since $n>2$, it follows from (\ref{eq:12}) and \autoref{lem:a-b} that
  \begin{equation*}
    H^2(Y,\sO_Y(a,b)\otimes \sF)=0
  \end{equation*}
  and that 
  \begin{multline*}
    H^1(Y,\sO_Y(a,b)\otimes \sF)\simeq \\ \simeq \xymatrix{%
      \coker\left[ H^0\left(Y,\sO_Y(a,b)\otimes
          \sym^2\left(F^*(V^\vee\otimes\sO_Y)\right) \right) \ar[r]^-{\eta_2}
      \right. & \left. H^0(Y,\sO_Y(a,b+2p)) \right]. }
  \end{multline*}
  The morphism $\eta_2$ here is given by the matrix
  $[y_i^py_j^p \ver i,j=0,\dots,n]$.
  Furthermore, it follows from \autoref{lem:B-coker} that
  \begin{multline*}
    H^1(Y,\sO_Y(a,b+p)\otimes F^*\sB)\simeq \\ \simeq \xymatrix{%
      \coker\left[ H^0(Y,\sO_Y(a,b+p)\otimes F^*(V^\vee\otimes\sO_Y) ) \ar[r]^-{\eta_1}
      \right. & \left. H^0(Y,\sO_Y(a,b+2p)) \right]. }
  \end{multline*}
  The morphism $\eta_1$ here is given by the matrix $[y_i^p \ver i=0,\dots,n]$.
  Note, that by assumption $a\geq 0$ and $b\geq -n+1\geq -p$, so
  $H^0(Y,\sO_Y(a,b+p)\otimes F^*(V^\vee\otimes\sO_Y) )\neq 0$.
  Then it is easy to see, for example from the description of $\eta_1$ and $\eta_2$
  above, that
  \[
  \im \eta_2 \subsetneq \im \eta_1,
  \]
  and hence combined with (\ref{eq:13}) the above imply that
  \begin{equation}
    \label{eq:15}
    H^1(Y,\sO_Y(a,b)\otimes \sym^2 F^*\sB)\simeq 
    \factor{  \im \eta_1}{\im \eta_2}\neq 0.
    \qedhere
  \end{equation}
\end{proof}

\begin{rem}\label{rem:musing}
  Observe that the previous argument was the place where working in positive
  characteristic was crucial. The morphisms $\eta_1$ and $\eta_2$ are given by the
  $p^\text{th}$ powers of the global sections of $\sO_Y(0,1)$. We obtain the
  non-trivial cokernels and the ``gap'' between them from the fact that the global
  sections of $\sO_Y(0,p)$ are not generated by these $p^\text{th}$ powers.

  This argument fails for several reasons in characteristic $0$. First of all,
  $p^\text{th}$ powers do not define an $\sO_Y$-module homomorphism. Of course, they
  do not define one in any characteristic, which is the reason that we first have to
  pull-back everything by the Frobenius. However, the $p^\text{th}$ powers do give an
  $F^*\sO_Y$-module homomorphism. There is of course no Frobenius in characteristic
  $0$, but one might think that then one could use another finite morphism to
  pull-back these sections and thereby replacing the global sections by an
  appropriate power. However, in characteristic $0$ this would mean switching to an
  actual cover many of whose properties would change. For instance, very likely that
  cover would no longer be Fano or even have negative Kodaira dimension and other
  parts of the proof would break down.

  To summarize, the reason this argument works in positive characteristic is that
  there is a high degree endomorphism which is one-to-one on points. Then again, this
  is not surprising at all as this is usually the reason when a statement holds in
  positive characteristic but not in characteristic $0$.
\end{rem}

\begin{thm}\label{thm:Kod-fails}
  If $p\leq n=3$, then $\dim X=6$, and $H^{5}(X,\omega_X^{2})\neq 0$.
\end{thm}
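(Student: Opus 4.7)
The plan is to use Serre duality to replace the top-cohomology question with a low-degree one, push the computation from $X$ down to $Y$ via the $\bP^1$-bundle $\pi$, and then feed the result directly into \autoref{lem:B-G} and \autoref{lem:HFB-not-0}. Since $X$ is smooth of dimension $3n-3=6$, Serre duality gives
\[
  H^5(X,\omega_X^2)\cong H^1(X,\omega_X^{-1})^{\vee},
\]
so it suffices to show $H^1(X,\omega_X^{-1})\neq 0$. Specializing \eqref{eq:1} to $n=3$ yields
\[
  \omega_X^{-1}\simeq \pi^*\sO_Y(3-p,\,3-p)\otimes \sO_\pi(2),
\]
an expression that is useful precisely because $\sO_\pi(2)$ is relatively globally generated, in contrast with the negative twist appearing in $\omega_X^2$ itself.

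Next I would push down along $\pi$. Since $\pi\colon X=\bP(F^*\sG')\to Y$ is a $\bP^1$-bundle, $\pi_*\sO_\pi(2)=\sym^2(F^*\sG')$ with vanishing higher direct images, so the projection formula together with the Leray spectral sequence give
\[
  H^1(X,\omega_X^{-1})\cong H^1\bigl(Y,\,\sO_Y(3-p,\,3-p)\otimes \sym^2(F^*\sG')\bigr).
\]
Unravelling $\sG'=\sG\otimes \sO_Y(0,1)$ and using $F^*\sO_Y(0,1)=\sO_Y(0,p)$, one has $\sym^2(F^*\sG')\simeq \sym^2(F^*\sG)\otimes \sO_Y(0,2p)$, hence
\[
  H^1(X,\omega_X^{-1})\cong H^1\bigl(Y,\,\sO_Y(3-p,\,3+p)\otimes \sym^2 F^*\sG\bigr).
\]

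It remains to show the right-hand side is non-zero. Setting $a=3-p$ and $b=3+p$, for $p\in\{2,3\}$ one has $0\le a<p$ and $b=3+p>-3=-n$, so both lemmas apply with these values. \autoref{lem:B-G} (with the hypothesis $a<p$) produces an injection
\[
  H^1\bigl(Y,\sO_Y(a,b)\otimes \sym^2 F^*\sB\bigr)\hookrightarrow H^1\bigl(Y,\sO_Y(a,b)\otimes \sym^2 F^*\sG\bigr),
\]
and \autoref{lem:HFB-not-0} (with $a\geq 0$ and $b>-n$) shows that the source is non-zero; hence so is the target, completing the proof. All of the genuine content — the positive-characteristic miracle that $p$th powers fail to span global sections of $\sO_Y(0,2p)$ — has already been absorbed into \autoref{lem:HFB-not-0}, so the only step worth watching in \autoref{thm:Kod-fails} is the arithmetic check that the exponents $(a,b)=(3-p,3+p)$ produced by the pushforward computation simultaneously satisfy $a\geq 0$, $a<p$, and $b>-n$; once that is in place, the proof reduces to stringing the two lemmas together.
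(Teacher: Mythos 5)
Your proof is correct and follows essentially the same route as the paper: Serre duality to reduce to $H^1(X,\omega_X^{-1})$, the projection formula along the $\bP^1$-bundle $\pi$ to land on $H^1(Y,\sO_Y(3-p,3+p)\otimes\sym^2F^*\sG)$, and then \autoref{lem:B-G} plus \autoref{lem:HFB-not-0} after checking $0\le 3-p<p$ and $3+p>-n$. Your explicit verification of these inequalities is in fact slightly cleaner than the paper's, which garbles that line with a typo.
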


\begin{proof}
  By Serre duality and (\ref{eq:1}) we have that
  \begin{multline}\label{eq:2}
    \begin{aligned}
      \qquad H^{i}(X,\omega_X^{2})^\vee 
      \simeq     H^{6-i}(X,\omega_X^{-1})  
      & \simeq  H^{6-i}(X,  \pi^*\sO_Y(3-p, 3-p)\otimes\sO_\pi(2))  \\
      & \simeq  H^{6-i}(Y,  \sO_Y(3-p, 3-p)\otimes\pi_*\sO_\pi(2))  \\
      & \simeq  H^{6-i}(Y,  \sO_Y(3-p, 3-p)\otimes \sym^2F^*\sG')  \\
      & \simeq  H^{6-i}(Y,  \sO_Y(3-p, 3+p)\otimes \sym^2F^*\sG)  \\
    \end{aligned}
  \end{multline}
  Since $p> a=3-p \geq$ and $b=3+p>-n=-3$, the statement follows from
  \autoref{lem:B-G} and \autoref{lem:HFB-not-0}.
\end{proof}

This might seem to give a desired example in $p=3$ as well, but this non-vanishing is
only interesting when $X$ is Fano, i.e., when $\omega_X^{-1}$ is ample and that only
holds when 
$p=2$.

\begin{cor}\label{cor:Fano-Kod-fails}
  If $n=3$ and $p=2$, then $X$ is a Fano variety on which $\omega_X^{-1}$ is very
  ample and $\omega_X^{-2}$ violates Kodaira vanishing. In particular,
  \autoref{thm:main1} follows.
\end{cor}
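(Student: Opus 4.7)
The plan is to assemble the three claims directly from the material already developed. First I will specialize the canonical bundle formula (\ref{eq:1}) to the values $n=3$, $p=2$. Substituting yields $p-n=-1$, $p(n-2)-n=-1$, and $-n+1=-2$, so
\[
\omega_X\simeq \pi^*\sO_Y(-1,-1)\otimes \sO_\pi(-2),
\qquad\text{hence}\qquad
\omega_X^{-1}\simeq \pi^*\sO_Y(1,1)\otimes \sO_\pi(2).
\]
This is exactly a line bundle of the shape $\pi^*\sO_Y(1,1)\otimes\sO_\pi(q)$ with $q=2>0$, which is very ample by the observation recorded in \eqref{say:very}. Consequently $\omega_X^{-1}$ is very ample, so in particular ample, and hence $X$ is Fano. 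This gives the first two of the conditions required by \autoref{thm:main1}, and $\dim X=3n-3=6$ follows from the dimension computation in \autoref{sec:constr-laur-rao}.

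Next I will handle the Kodaira-vanishing violation for $\omega_X^{-2}$. Since $\omega_X^{-1}$ is very ample, so is $\omega_X^{-2}$; in particular $\omega_X^{-2}$ is ample and the definition of ``violates Kodaira vanishing'' in \autoref{defs} applies. By that definition, $\omega_X^{-2}$ violates Kodaira vanishing precisely if $H^i(X,\omega_X^{2})\neq 0$ for some $i<\dim X=6$. But \autoref{thm:Kod-fails} (whose hypothesis $p\leq n=3$ is satisfied since $p=2$) supplies exactly this: $H^{5}(X,\omega_X^{2})\neq 0$. This establishes condition (iii) of \autoref{thm:main1} and, simultaneously, the very-ampleness assertion in the corollary.

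The only real content is therefore packaged in \autoref{thm:Kod-fails}; everything else is bookkeeping via (\ref{eq:1}) and \eqref{say:very}. No new obstacle is expected, since all three conditions of \autoref{thm:main1} ($\dim X=6$, $\omega_X^{-1}$ very ample, $\omega_X^{-2}$ violates Kodaira vanishing) have now been verified, yielding the corollary together with the promised consequence for \autoref{thm:main1}.
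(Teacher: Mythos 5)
Your proposal is correct and follows essentially the same route as the paper: specialize \eqref{eq:1} to $n=3$, $p=2$ to get $\omega_X\simeq\pi^*\sO_Y(-1,-1)\otimes\sO_\pi(-2)$, invoke \eqref{say:very} for very ampleness of $\omega_X^{-1}$, and invoke \autoref{thm:Kod-fails} for $H^5(X,\omega_X^2)\neq 0$. The extra bookkeeping you include (the dimension count and the check that $\omega_X^{-2}$ is ample so the definition in \autoref{defs} applies) is harmless and matches what the paper leaves implicit.
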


\begin{proof}
  If $n=3$ and $p=2$, then by (\ref{eq:1}) $\omega_X\simeq \pi^*\sO_Y(-1,-1)\otimes
  \sO_\pi(-2)$ and hence $\omega_X^{-1}$ is very ample by \autoref{say:very}. By
  \autoref{thm:Kod-fails}, $\omega_X^{-2}$ violates Kodaira vanishing.
\end{proof}

\begin{cor}
  \autoref{thm:main2} holds.
\end{cor}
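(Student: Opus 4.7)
The plan is to deduce \autoref{thm:main2} by applying the general cone construction of \autoref{say:construction} and \autoref{prop:klt} to the Fano variety $X$ provided by \autoref{cor:Fano-Kod-fails}, using the polarization $\sL:=\omega_X^{-1}$.

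First I would fix $X$ as in \autoref{cor:Fano-Kod-fails}, so $X$ is a smooth Fano variety of dimension $6$ over $k$ (with $\kar k=2$) for which $\omega_X^{-1}$ is very ample and $\omega_X^{-2}$ violates Kodaira vanishing. Define
\[
Z:=C_a(X,\omega_X^{-1})=\Spec\!\left(\bigoplus_{m\geq 0} H^0(X,\omega_X^{-m})\right)
\]
as in \autoref{say:construction}. Then $\dim Z=\dim X+1=7$, and $Z$ has a single isolated singularity at the vertex $v\in Z$; a resolution of singularities is given by blowing up $v$, whose exceptional divisor is isomorphic to $X$ (which is smooth), so the total space is smooth. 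This takes care of \autoref{thm:main2}\,(a) together with the statement about the resolution.

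Next I would verify the remaining properties by invoking \autoref{prop:klt} with $\sL=\omega_X^{-1}$. Since $\omega_X\simeq \sL^{-1}$, item \autoref{item:3} of \autoref{prop:klt} applies with $q=-1$, so $Z$ has canonical singularities; in particular, by \autoref{say:Q-Gor} with $r=1$, $K_Z$ is Cartier, i.e., $\omega_Z$ is a line bundle. This gives \autoref{thm:main2}\,(b) and the canonicity assertion in (a). Finally, since $\sL^2=\omega_X^{-2}$ violates Kodaira vanishing by \autoref{cor:Fano-Kod-fails}, \autoref{say:not-CM} (equivalently \autoref{prop:klt}\,\autoref{item:1}) shows that $Z$ is not \CM; combined with $\omega_Z$ being a line bundle, this also shows $Z$ is not Gorenstein (else it would be \CM) and not rational (by \autoref{lem:rtl-is-CM}), yielding \autoref{thm:main2}\,(c).

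There is no real obstacle here: all the hard work is already packaged in \autoref{cor:Fano-Kod-fails} and \autoref{prop:klt}. The only thing to be careful about is to use the polarization $\omega_X^{-1}$ itself (rather than some higher power), so that $\omega_Z$ ends up being a line bundle and not merely $\bQ$-Cartier of higher index.
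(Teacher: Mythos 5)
Your proposal is correct and follows exactly the route of the paper, whose own proof is the one-line ``Let $Z=C_a(X,\omega_X^{-1})$; the statement follows from \autoref{cor:Fano-Kod-fails} and \autoref{prop:klt}.'' You simply unpack the same two references (using $\sL=\omega_X^{-1}$ so that $q=-1$ in \autoref{prop:klt}\,\autoref{item:3} and $\omega_Z$ is an honest line bundle), which is a faithful elaboration rather than a different argument.
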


\begin{proof}
  Let $Z=C_a(X,\omega_X^{-1})$. Then the statement follows from
  \autoref{cor:Fano-Kod-fails} and \autoref{prop:klt}.
\end{proof}


\def\cprime{$'$} \def\polhk#1{\setbox0=\hbox{#1}{\ooalign{\hidewidth
  \lower1.5ex\hbox{`}\hidewidth\crcr\unhbox0}}} \def\cprime{$'$}
  \def\cprime{$'$} \def\cprime{$'$} \def\cprime{$'$}
  \def\polhk#1{\setbox0=\hbox{#1}{\ooalign{\hidewidth
  \lower1.5ex\hbox{`}\hidewidth\crcr\unhbox0}}} \def\cdprime{$''$}
  \def\cprime{$'$} \def\cprime{$'$} \def\cprime{$'$} \def\cprime{$'$}
  \def\cprime{$'$}
\providecommand{\bysame}{\leavevmode\hbox to3em{\hrulefill}\thinspace}
\providecommand{\MR}{\relax\ifhmode\unskip\space\fi MR}
\providecommand{\MRhref}[2]{%
  \href{http://www.ams.org/mathscinet-getitem?mr=#1}{#2}
}
\providecommand{\href}[2]{#2}

\end{document}